\newtheorem{thm}{Theorem}[section]
\newtheorem{lem}[thm]{Lemma}
\newtheorem{cor}[thm]{Corollary}
\newtheorem{prop}[thm]{Proposition}
\theoremstyle{definition}
\newtheorem{defn}[thm]{Definition}
\theoremstyle{remark}
\theoremstyle{remark}
\newtheorem{eg}[thm]{Example}
\numberwithin{equation}{section}
\newenvironment{ack}{\bigskip\noindent\textbf{Acknowledgments.}}{}
\long\def\blankfootnotetext#1{\begingroup\def\thefootnote{\fnsymbol{footnote}}\footnotetext{#1}\endgroup}
\newcommand{\Z}{\mathbb{Z}}
\newcommand{\R}{\mathbb{R}}
\newcommand{\C}{\mathbb{C}}
\newcommand{\abs}[1]{\left\vert{#1}\right\vert}
\newcommand{\vol}[1]{\mathrm{vol}\!\left(#1\right)}
\renewcommand{\dim}[1]{\mathrm{dim}\!\left(#1\right)}
\renewcommand{\Re}[1]{\mathrm{Re}\!\left(#1\right)}
\begin{document}
\author[G.~Heged{\"u}s]{G{\'a}bor Heged{\"u}s}
\address{Johann Radon Institute for Computational and Applied Mathematics\\Austrian Academy of Sciences\\Altenbergerstra{\ss}e 69\\A-$4040$ Linz\\Austria}
\email{gabor.hegedues@oeaw.ac.at}
\author[A.~M.~Kasprzyk]{Alexander M.~Kasprzyk}
\address{School of Mathematics and Statistics\\University of Sydney\\Sydney\ NSW $2006$\\Australia}
\email{a.m.kasprzyk@usyd.edu.au}
\subjclass[2010]{52B20 (Primary); 52C07, 11H06 (Secondary)}
\title{Roots of Ehrhart Polynomials of Smooth Fano Polytopes}
\begin{abstract}
V.~Golyshev conjectured that for any smooth polytope $P$ with $\dim{P}\le 5$ the roots $z\in\C$ of the Ehrhart polynomial for $P$ have real part equal to~$-1/2$. An elementary proof is given, and in each dimension the roots are described explicitly. We also present examples which demonstrate that this result cannot be extended to dimension six.
\end{abstract}
\maketitle
\blankfootnotetext{Research supported in part by OTKA grant K77476.}
\section{Introduction}
Let $P$ be a $d$-dimensional convex lattice polytope in $\R^d$. Let $L_P(m):=\abs{mP\cap\Z^d}$ denote the number of lattice points in $P$ dilated by a factor of $m\in\Z_{\geq 0}$. In general the function $L_P$ is a polynomial of degree $d$, called the Ehrhart polynomial~\cite{Ehr67}.

The roots of Ehrhart polynomials have recently been the subject of much study (for example~\cite{BHW07,BD08,HHO10,Pfe07}), with a significant portion of this work being based on exhaustive computer calculations using the known classifications of polytopes. It has been conjectured in~\cite{BLDPS05} that if $z\in\C$ is a root of $L_P$, then the real part $\Re{z}$ is bounded by $-d\leq\Re{z}\leq d-1$; Braun has shown~\cite{Bra08} that $z$ lies inside the disc centred at $-1/2$ of radius $d(d-1/2)$.

\begin{defn}\label{defn:reflexive}
A convex lattice polytope $P$ is called \emph{reflexive} if the dual polytope
$$P^\vee:=\{u\in\R^d\mid \left<u,v\right>\le 1\text{ for all }v\in P\}$$
is also a lattice polytope.
\end{defn}

There are many interesting and well-known characterisations of reflexive polytopes (for example~\cite[Theorem~3.5]{HK10}). They are of particular relevance to toric geometry: reflexive polytopes correspond to Gorenstein toric Fano varieties (see~\cite{Bat94}) and have been classified up to dimension four.

Any reflexive polytope $P$ satisfies
\begin{equation}\label{eq:reflexive_layers}
L_P(m)=L_{\partial P}(m) + L_P(m-1) \text{ for all } m\in\Z_{>0},
\end{equation}
where $\partial P$ denotes the boundary of $P$. As a consequence, Macdonald's Reciprocity Theorem~\cite{Mac71} tells us that $L_P(-m-1)=(-1)^dL_P(m)$. In particular we observe that the roots of $L_P$ are symmetrically distributed with respect to the line $\Re{z}=-1/2$.

\begin{thm}[\protect{\cite[Proposition~1.8]{BHW07}}]
Let $P$ be a $d$-dimensional convex lattice polytope such that for all roots $z$ of $L_P$, $\Re{z}=-1/2$. Then, up to unimodular translation, $P$ is a reflexive polytope with $\vol{P}\le 2^d$.
\end{thm}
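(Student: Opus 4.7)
The plan is to extract two consequences from the assumption that every root of $L_P$ satisfies $\Re{z}=-1/2$. Writing $L_P(t)=c\prod_{i=1}^{d}(t-z_i)$, I would first observe that, since $L_P$ has rational coefficients, its non-real roots occur in complex conjugate pairs, while any real root must equal $-1/2$. Hence the multiset $\{z_1,\ldots,z_d\}$ is invariant under the involution $z\mapsto -1-z$, and this translates into the polynomial identity
\[
L_P(-m-1)=(-1)^d L_P(m).
\]

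To establish reflexivity, I would combine this functional equation with Ehrhart--Macdonald reciprocity $L_P(-m)=(-1)^d L_{P^{\circ}}(m)$ (where $P^{\circ}$ denotes the interior of $P$), yielding $L_{P^{\circ}}(m+1)=L_P(m)$ for every integer $m$. Setting $m=0$ gives $L_{P^{\circ}}(1)=L_P(0)=1$, so $P$ contains a unique interior lattice point $v$. After translating $v$ to the origin, the functional equation together with the presence of an interior lattice point is precisely the classical characterisation (due to Hibi) of reflexive polytopes, equivalent to palindromy of the $h^*$-vector. This furnishes the reflexivity claim.

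For the volume estimate, I would use that the leading coefficient of $L_P$ is $\vol{P}$ and the constant term is $L_P(0)=1$, so that
\[
\vol{P}\cdot\prod_{i=1}^{d}(-z_i)=1.
\]
Grouping the roots, each complex conjugate pair contributes $(-z)(-\overline{z})=|z|^{2}=\tfrac{1}{4}+t^{2}\ge\tfrac{1}{4}$, while each real root equals $-1/2$ and contributes exactly $1/2$. Writing $d=r+2p$ with $r$ real roots and $p$ conjugate pairs, one obtains $\prod_{i}(-z_i)\ge (1/2)^{r}(1/4)^{p}=2^{-d}$, and therefore $\vol{P}\le 2^{d}$.

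The main obstacle is the reflexivity half of the statement: the functional equation by itself does not immediately produce a reflexive polytope, and one must combine it with Ehrhart reciprocity to locate a unique interior lattice point and then appeal to Hibi's characterisation to conclude. The volume bound, by contrast, is a purely elementary consequence of the normalisation $L_P(0)=1$ and the constraint that all roots lie on $\Re{z}=-1/2$.
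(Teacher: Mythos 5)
Your argument is correct, but note that the paper does not prove this statement at all: it is quoted verbatim from \cite[Proposition~1.8]{BHW07}, so there is no internal proof to compare against, and your route is the standard one (essentially that of the cited source). The key steps are all sound: rationality of the coefficients plus the hypothesis $\Re{z}=-1/2$ make the root multiset invariant under $z\mapsto -1-z$, giving the functional equation $L_P(-m-1)=(-1)^dL_P(m)$; combining with Ehrhart--Macdonald reciprocity yields $L_{P^\circ}(m+1)=L_P(m)$, hence a unique interior lattice point, and then reflexivity follows from Hibi's palindromic-$h^*$ characterisation (the converse direction of \eqref{eq:reflexive_layers}), which is the one genuinely non-elementary ingredient and is legitimately cited rather than re-proved. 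The volume estimate is handled exactly right: $1=L_P(0)=\vol{P}\prod_i(-z_i)$, each real root contributes $1/2$ and each conjugate pair contributes $\abs{z}^2\ge 1/4$, so $\prod_i(-z_i)\ge 2^{-d}$ and $\vol{P}\le 2^d$ (with $\vol{P}$ the Euclidean volume, i.e.\ the leading coefficient of $L_P$, consistent with the paper's usage).
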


\begin{thm}[\protect{\cite[Theorem~0.1]{HHO10}}]
In each dimension $d$ there exists a reflexive polytope $P$ such that if $z\in\C\setminus\R$ is a root of $L_P$ then $\Re{z}=-1/2$.
\end{thm}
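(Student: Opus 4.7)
The plan is to exhibit, in each dimension $d$, an explicit reflexive polytope whose Ehrhart polynomial satisfies the stated condition. The simplest candidate is the $d$-cube $P:=[-1,1]^d$: its dual is $\mathrm{conv}(\pm e_1,\ldots,\pm e_d)$, which is a lattice polytope, so $P$ is reflexive in the sense of Definition~\ref{defn:reflexive}. Counting lattice points in $mP=[-m,m]^d$ gives
\[
L_P(m)=(2m+1)^d,
\]
whose only complex root is $-1/2$, with multiplicity $d$. Since this root is real, the hypothesis on non-real roots is vacuous and the conclusion $\Re{z}=-1/2$ holds trivially. This already establishes the theorem in every dimension.

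For a less degenerate family in which $L_P$ genuinely possesses non-real roots all lying on the critical line, the natural candidate is the cross-polytope $Q:=\mathrm{conv}(\pm e_1,\ldots,\pm e_d)\subset\R^d$, which is reflexive (being dual to the cube above). A direct count, stratified by the number of nonzero coordinates, yields
\[
L_Q(m)=\sum_{k=0}^{d}2^k\binom{d}{k}\binom{m}{k}.
\]
Substituting $m=s-1/2$ produces the polynomial $\widetilde{Q}_d(s):=L_Q(s-1/2)$, which by Macdonald reciprocity satisfies $\widetilde{Q}_d(-s)=(-1)^d\widetilde{Q}_d(s)$. Hence $\widetilde{Q}_d(s)=s^{d\bmod 2}\,R_d(s^2)$ for some polynomial $R_d$ of degree $\lfloor d/2\rfloor$, and the critical-line property for $L_Q$ reduces to showing that every root of $R_d$ is real and non-positive.

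The main obstacle in this second approach is precisely the real-rootedness and non-positivity of the roots of $R_d$. I would attempt this either by identifying $R_d$ (up to normalization) with a classical orthogonal family such as a Meixner or Krawtchouk polynomial, whose root loci are well understood, or alternatively by induction on $d$ using the bipyramidal realization of the cross-polytope $Q_{d+1}$ over $Q_d$ together with an interlacing argument on the even/odd parts of $\widetilde{Q}_{d+1}$ and $\widetilde{Q}_d$. Since the cube $[-1,1]^d$ already supplies a valid witness, however, this harder analysis is not logically necessary for the theorem as stated.
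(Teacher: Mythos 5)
The paper does not actually prove this statement: it is quoted as background from \cite{HHO10}, so there is no internal proof to compare against, and your argument has to stand on its own. Read literally, your cube witness is valid: $[-1,1]^d$ is reflexive, $L_P(m)=(2m+1)^d$, and since this has no non-real roots (indeed its only root is $-1/2$), the stated implication holds in every dimension. So against the letter of the statement there is no gap. But your own observation shows that the sentence, as transcribed in this paper, is a weak paraphrase: a claim satisfiable by such a vacuous witness would not need to be cited from \cite{HHO10}. The content of the cited theorem is the existence, in each dimension, of reflexive polytopes whose Ehrhart polynomials genuinely possess non-real roots, all lying on the canonical line $\Re{z}=-1/2$; that is exactly what your second construction is after, and it is the half you leave unfinished.

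For the cross-polytope $Q$, your count $L_Q(m)=\sum_{k=0}^{d}2^k{d\choose k}{m\choose k}$ and the reduction via Macdonald reciprocity to showing that $R_d$ has only real non-positive roots are both correct, but the real-rootedness step is precisely the non-trivial mathematics, and ``identify with a classical orthogonal family or run an interlacing induction'' is a plan, not a proof. (It is in fact a known theorem -- Bump--Choi--Kurlberg--Vaaler's ``local Riemann hypothesis,'' discussed in \cite{BLDPS05} -- that every root of $L_Q$ has real part $-1/2$; \cite{HHO10} instead exhibits other explicit Gorenstein Fano polytopes with this behaviour.) So: as a proof of the literal statement, the cube suffices and your argument is complete, though essentially by exploiting a vacuity the original authors did not intend; as a proof of the substantive result being invoked, only the cross-polytope half would do, and that half is currently a sketch whose key step (real-rootedness and non-positivity of the roots of $R_d$) remains unproven.
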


\begin{defn}\label{defn:smooth}
A $d$-dimensional convex lattice polytope $P$ is called \emph{smooth} if the vertices of any facet of $P$ form a $\Z$-basis of the ambient lattice $\Z^d$.
\end{defn}

Clear any smooth polytope is simplicial and reflexive. Smooth polytopes are in bijective correspondence with non-singular toric Fano varieties, and have been classified up to dimension eight~\cite{Obr07}.

V.~Golyshev conjectured in~\cite[\S5]{Gol09} that, for any smooth polytope $P$ of dimension $d\le 5$, the roots $z\in\C$ of $L_P$ satisfy $\Re{z}=-1/2$ (the ``canonical line hypothesis''). Notice that it is not required that $z\notin\R$. We prove Golyshev's conjecture without resorting to the known classifications -- see Sections~\ref{sec:dim_2_and_3} and~\ref{sec:dim_4_and_5} below.

\begin{thm}[Golyshev]\label{thm:Golyshev}
Let $P$ be a smooth polytope of dimension $d\leq 5$. If $z\in\C$ is a root of $L_P(m)$ then $\Re{z}=-1/2$. 
\end{thm}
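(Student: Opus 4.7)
The plan is to exploit the palindromic symmetry of $L_P$ to reduce the claim to an algebraic inequality in the coefficients of $L_P$, then to use smoothness to express those coefficients combinatorially via the $f$-vector of $P$ and verify the inequality dimension by dimension.

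Set $Q(w):=L_P(w-1/2)$. The reciprocity $L_P(-m-1)=(-1)^d L_P(m)$ translates to $Q(-w)=(-1)^d Q(w)$, so $Q$ is even when $d$ is even and odd when $d$ is odd. Writing $Q(w)=\tilde Q(w^2)$ or $Q(w)=w\,\tilde Q(w^2)$, the conclusion that every root of $L_P$ has real part $-1/2$ is equivalent to asserting that $\tilde Q$, a polynomial of degree $\lfloor d/2\rfloor\le 2$, has all its roots real and non-positive. For $d\le 3$ this is a single sign condition on two coefficients; for $d\in\{4,5\}$ it amounts to a non-negative discriminant together with sign conditions on the three coefficients of a quadratic in $u=w^2$.

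Smoothness then lets us express $L_P$ combinatorially. Every face of $P$ is a unimodular simplex (any subset of a lattice basis generates a saturated sublattice), and the relative interior of the $m$th dilate of a unimodular $i$-simplex contains $\binom{m-1}{i}$ lattice points, yielding $L_{\partial P}(m)=\sum_{i=0}^{d-1}f_i(P)\binom{m-1}{i}$. Combining this with~\eqref{eq:reflexive_layers} and telescoping gives
\begin{equation*}
L_P(m)=\sum_{j=0}^{d}f_{j-1}(P)\binom{m}{j},\qquad f_{-1}:=1,
\end{equation*}
so that the coefficients of $\tilde Q$ are rational-linear functions of the $f$-vector. Simpliciality of $\partial P$ combined with the reciprocity identity $L_P(-1)=(-1)^d$ (which encodes Euler's relation and its higher-dimensional analogues) reduces the number of independent parameters to a small handful.

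In dimensions $2$ and $3$, after these reductions the inequality becomes a bound on the number of vertices $f_0$, which I would prove by a direct argument from the unimodular-basis condition at each facet together with the convexity of $P$ around the origin. The main obstacle is dimensions $4$ and $5$, where the criterion involves a discriminant together with two sign conditions on a quadratic in $w^2$, so several entries of the $f$-vector must be controlled simultaneously. For these cases I expect one must exploit the local lattice structure at each vertex (where smoothness forces the vertex figure to be a unimodular simplicial cone) and aggregate these local constraints into the required global polynomial inequalities, without invoking the known classifications of smooth Fano polytopes.
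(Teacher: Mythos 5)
Your reduction is sound and is essentially the paper's: the substitution $w=z+1/2$ and the parity of $Q(w)=L_P(w-1/2)$ (equivalently, separating real and imaginary parts at $z=-1/2+\beta i$) turns the claim into the statement that a quadratic in $w^2$ has real non-positive roots, and the unimodularity of the faces gives exactly the expansion $L_P(m)=\sum_i f_i\binom{m}{i+1}$ of Lemma~\ref{lem:smooth_point_counting}, with the Dehn--Sommerville-type relations (your ``reciprocity reduces the parameters'') cutting the data down to $f_0$ for $d\le 3$ and to $(f_0,b_2)$, i.e.\ $(f_0,f_1)$, for $d=4,5$. But at that point the proof is not done; it has only been translated into explicit polynomial inequalities in $f_0$ and $b_2$, and establishing those inequalities is the actual content of the theorem. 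For $d=2,3$ one needs $f_0\le 8$ resp.\ $f_0\le 26$, which the paper gets from Casagrande's bound (Theorem~\ref{thm:Casagrande}); your ``direct argument from the unimodular-basis condition and convexity'' is plausible in the plane but is only a sketch in dimension three. For $d=4,5$ one needs simultaneous two-sided control of $b_2$ in terms of $f_0$ --- in the paper, $5f_0-10\le b_2\le 5f_0$ (Lemma~\ref{lem:bounds_dim_4_1}) and $42f_0-105\le 7b_2\le 52f_0-90$ (Lemma~\ref{lem:dim_5_bound_1}), imported from~\cite{HK10} --- from which the discriminant positivity and the sign conditions on the quadratic in $\beta^2$ are then deduced by nontrivial algebraic manipulation (Lemmas~\ref{lem:bounds_dim_4_2} and~\ref{lem:dim_5_bound_2}, the latter requiring a case split on $f_0$ and the extra input $f_1\le\binom{f_0}{2}$).

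Your proposal stops exactly there: ``I expect one must exploit the local lattice structure at each vertex \ldots and aggregate these local constraints into the required global polynomial inequalities'' is a statement of hope, not an argument, and it is precisely the step where all the difficulty lives. Without the vertex bound and the $b_2$--$f_0$ inequalities (or some substitute for them), nothing prevents the discriminant from being negative or a coefficient from having the wrong sign, in which case the quadratic in $w^2$ would acquire a positive real root and $L_P$ a real root off the line $\Re{z}=-1/2$ --- which is exactly what does happen in dimension six, as the paper's examples show. So the skeleton matches the published proof, but the proposal has a genuine gap: the quantitative inequalities that make the criterion hold in dimensions up to five are asserted to be obtainable rather than obtained.
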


Explicit descriptions of the roots are given in Corollaries~\ref{cor:explicit_dim_2_and_3} and~\ref{cor:explicit_dim_4_and_5}. We summarise them in the following theorem.

\begin{thm}\label{thm:explicit}
Let $P$ be a smooth $d$-dimensional polytope, and suppose that $z=-1/2+\beta i\in\C$ is a root of $L_P$. If $d=2$ then
$$\beta^2=-\frac{1}{4}+\frac{2}{f_0}.$$
If $d=3$ then $\beta=0$ or
$$\beta^2=-\frac{1}{4}+\frac{6}{f_0-2}.$$
If $d=4$ then
$$\beta^2=-\frac{17}{4}+\frac{3b_2}{b_2-2f_0}\pm\sqrt{1-\frac{12(f_0+2)}{b_2-2f_0}+\frac{36f_0^2}{(b_2-2f_0)^2}}.$$
If $d=5$ then $\beta=0$ or
$$\beta^2=-\frac{5}{4}+\frac{10(f_0-2)}{6+b_2-4f_0}\pm\sqrt{1-\frac{20(f_0+4)}{6+b_2-4f_0}+\frac{100(f_0-2)^2}{(6+b_2-4f_0)^2}}.$$
\end{thm}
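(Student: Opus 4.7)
My strategy is to handle each of the four dimensions in turn, using Macdonald reciprocity to reduce root-finding for $L_P$ to a quadratic in $u=(z+\tfrac12)^2$, which can then be solved explicitly by the quadratic formula.

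The key combinatorial input is that every smooth reflexive polytope $P$ has Ehrhart polynomial
\[
L_P(m)=\sum_{k=0}^{d}f_{k-1}\binom{m}{k},\qquad f_{-1}:=1,
\]
where $f_k$ denotes the number of $k$-dimensional faces of $P$. Indeed, (\ref{eq:reflexive_layers}) gives $L_P(m)-L_P(m-1)=L_{\partial P}(m)$, while Definition~\ref{defn:smooth} forces every proper face $F\subset P$ to be a standard simplex in its induced affine lattice, so that its interior lattice-point count is $\binom{m-1}{\dim F}$; a hockey-stick telescoping of $L_{\partial P}$ summed over proper faces then yields the displayed formula. Euler's relation and Dehn--Sommerville on the simplicial $(d-1)$-sphere $\partial P$ cut the $f$-vector down to a single free parameter $f_0$ when $d\le 3$ and to two free parameters $(f_0,b_2)$ when $d\in\{4,5\}$, so substituting back gives closed expressions for every Ehrhart coefficient in terms of $f_0$ and $b_2$.

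Macdonald reciprocity $L_P(-m-1)=(-1)^d L_P(m)$ next implies that $L_P(m)$, viewed as a polynomial in $w:=m+\tfrac{1}{2}$, is even (resp.\ odd) when $d$ is even (resp.\ odd); writing $L_P(m)=Aw^d+Bw^{d-2}+Cw^{d-4}+\cdots$ therefore leaves at most three nonzero coefficients when $d\le 5$, and these are now known explicitly. A putative root $z=-\tfrac{1}{2}+i\beta$ corresponds to $u=w^2=-\beta^2$, so after dividing out the trivial real root $z=-\tfrac{1}{2}$ that appears in odd dimension, the equation $L_P(z)=0$ becomes the linear equation $Au+C=0$ for $d\in\{2,3\}$ and the quadratic $Au^2+Bu+C=0$ for $d\in\{4,5\}$. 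Applying the quadratic (or linear) formula in $u$ and replacing $u$ by $-\beta^2$ and $A,B,C$ by their expressions in $f_0,b_2$ produces the four formulas claimed in the theorem.

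The hardest step will be the final algebraic simplification in dimensions $4$ and $5$: one has to rearrange the output of the quadratic formula, applied to rational functions of $f_0$ and $b_2$, into the particular closed forms involving $-\tfrac{17}{4}+3b_2/(b_2-2f_0)$ (for $d=4$) and $-\tfrac{5}{4}+10(f_0-2)/(6+b_2-4f_0)$ (for $d=5$). This is routine but fiddly; the remaining ingredients---Ehrhart reciprocity, the hockey-stick identity, and standard face-count relations on simplicial spheres---are entirely standard.
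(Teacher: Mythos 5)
Your proposal is correct and takes essentially the same route as the paper: there, too, one expresses $L_P(m)=\sum_i f_i\binom{m}{i+1}$ using smoothness and the reflexive layering (Lemma~\ref{lem:smooth_point_counting}), reduces the coefficients to $f_0$ and $b_2$ via the $f$-vector identities cited from~\cite{HK10} (Corollary~\ref{cor:smooth_Ehrhart_polys}), and obtains the stated formulas by solving the quadratic in $\beta^2$ that arises upon substituting $z=-\tfrac12+\beta i$ (equations~\eqref{eq:dim_4_condition} and~\eqref{eq:dim_5_imaginary_2}). Your appeal to Macdonald reciprocity to make $L_P$ even or odd in $w=m+\tfrac12$ is only a mild repackaging of that substitution, not a genuinely different argument.
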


The following example demonstrates that we cannot extend Theorem~\ref{thm:Golyshev} to dimension~$6$.

\begin{eg}
There exist exactly four smooth polytopes in dimension six having roots $z$ of the Ehrhart polynomial such that $\Re{z}\ne-1/2$; in each case $z\notin\R$. The polytopes have IDs $1895$, $1930$, $4853$, and $5817$ in the Graded Ring Database\footnote{\href{http://grdb.lboro.ac.uk/search/toricsmooth?id_cmp=in&id=1895,1930,4853,5817}{\texttt{http://grdb.lboro.ac.uk/search/toricsmooth?id\_cmp=in\&id=1895,1930,4853,5817}}}. The corresponding Ehrhart polynomials are:
\begin{align*}
&1+\frac{31}{10}m+\frac{257}{60}m^2+\frac{5}{2}m^3+\frac{19}{12}m^4+\frac{2}{5}m^5+\frac{2}{15}m^6,\\
&1+\frac{7}{2}m+\frac{175}{36}m^2+\frac{35}{12}m^3+\frac{35}{18}m^4+\frac{7}{12}m^5+\frac{7}{36}m^6,\\
&1+\frac{7}{2}m+\frac{21}{4}m^2+\frac{15}{4}m^3+\frac{5}{2}m^4+\frac{3}{4}m^5+\frac{1}{4}m^6,\\
&1+\frac{31}{10}m+\frac{257}{60}m^2+\frac{5}{2}m^3+\frac{19}{12}m^4+\frac{2}{5}m^5+\frac{2}{15}m^6.
\end{align*}
The second polytope has roots where $\Re{z}>0$, and where $\Re{z}<-1$. This demonstrates that the more general ``canonical strip hypothesis'' does not hold in dimension six.
\end{eg}

\section{Dimensions Two and Three}\label{sec:dim_2_and_3}
One of the fundamental pieces of numerical data associated with a polytope is the $f$-vector, which enumerates the number of faces of $P$. We begin by deriving an expression for the Ehrhart polynomial of a smooth polytope in terms of its $f$-vector.

\begin{defn}\label{defn:f_vector}
Let $P$ be a $d$-dimensional convex polytope. Define $f_{-1}:=1$, $f_d:=1$, and $f_i$ equal to the number of $i$-dimensional faces of $P$, for any $0\le i\le d-1$. The \emph{$f$-vector} of $P$ is the sequence $(f_{-1},f_0,\ldots,f_d)$.
\end{defn}

\begin{lem}\label{lem:smooth_point_counting}
Let $P$ be a $d$-dimensional smooth polytope. Then
$$L_P(m)=\sum_{i=-1}^{d-1}f_i{m\choose i+1}\quad\text{and}\quad L_{\partial P}(m)=\sum_{i=0}^{d-1} f_i{m-1\choose i}.$$
\end{lem}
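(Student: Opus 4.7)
The plan is to first establish the boundary formula by a direct face-by-face count, and then to derive the formula for $L_P(m)$ by combining the boundary formula with the reflexive-layer identity \eqref{eq:reflexive_layers} and inducting on $m$.

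The key structural fact I would verify first is that every face of a smooth polytope $P$ is a unimodular simplex (that is, unimodularly equivalent to a standard simplex $\operatorname{conv}(0,e_1,\ldots,e_i)$). Definition~\ref{defn:smooth} gives this for facets. For a lower-dimensional face $F$ with vertex set $\{v_0,\ldots,v_i\}$ contained in some facet with vertex set $\{v_0,\ldots,v_{d-1}\}$, the vectors $v_1-v_0,\ldots,v_{d-1}-v_0$ also form a $\Z$-basis of $\Z^d$ (change of basis), so the subset $v_1-v_0,\ldots,v_i-v_0$ is a basis of the saturated sublattice $\Z^d\cap\operatorname{span}_\R(v_1-v_0,\ldots,v_i-v_0)$, whence $F$ is a unimodular $i$-simplex.

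With this in hand, I would use the elementary fact that the relative interior of the $m$-fold dilate of a unimodular $i$-simplex contains exactly $\binom{m-1}{i}$ lattice points (by counting positive integer solutions to $y_1+\cdots+y_{i+1}=m$). Decomposing $\partial P$ as the disjoint union of the relative interiors of its proper faces and summing by dimension then gives
$$L_{\partial P}(m)=\sum_{i=0}^{d-1}f_i\binom{m-1}{i},$$
initially for $m\ge 1$ and hence as a polynomial identity in $m$.

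For the formula for $L_P(m)$ I would induct on $m\ge 0$. The base case $m=0$ is $L_P(0)=1=f_{-1}\binom{0}{0}$, as $\binom{0}{i+1}=0$ for $i\ge 0$. For the inductive step, since $P$ is reflexive, \eqref{eq:reflexive_layers} combined with the boundary formula and the inductive hypothesis yields
$$L_P(m)=\sum_{i=-1}^{d-1}f_i\binom{m-1}{i}+\sum_{i=-1}^{d-1}f_i\binom{m-1}{i+1},$$
after adjoining the vanishing term $f_{-1}\binom{m-1}{-1}$ to the boundary sum. Applying Pascal's identity term by term collapses this to $\sum_{i=-1}^{d-1}f_i\binom{m}{i+1}$, as required. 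The one substantive input is the unimodularity of all faces (not merely facets); everything else is routine bookkeeping with binomial coefficients.
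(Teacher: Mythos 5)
Your argument is correct and essentially the paper's: both prove the boundary formula by decomposing $\partial P$ into the relative interiors of its proper faces and using that every face of a smooth polytope is a unimodular simplex, and both then obtain $L_P$ from the layer identity \eqref{eq:reflexive_layers} (the paper sums the layers directly via $\sum_{k=1}^m\binom{k-1}{i}=\binom{m}{i+1}$, which is just your induction with Pascal's rule unrolled). One small slip in your unimodularity argument: $v_1-v_0,\ldots,v_{d-1}-v_0$ are only $d-1$ vectors and so cannot be a $\Z$-basis of $\Z^d$; you want $v_0,\,v_1-v_0,\ldots,v_{d-1}-v_0$, after which your conclusion that any subset of these difference vectors is a basis of the saturated sublattice it spans goes through.
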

\begin{proof}
Clearly
$$L_{\partial P}(m)=f_0+\sum_F\abs{(mF)^\circ\cap\Z^d},$$
where the sum is taken over all $i$-dimensional faces $F$ of $P$, $i>0$, and $Q^\circ$ denotes the (relative) interior of $Q$. Since $P$ is smooth, $F\cap\Z^d$ forms part of a basis for the underlying lattice $\Z^d$ for any face $F$. Hence
$$L_{\partial P}(m)=\sum_{i=0}^{d-1}f_i{m-1\choose i}.$$

To calculate $L_P(m)$ we make use of~\eqref{eq:reflexive_layers}:
\begin{align*}
L_P(m)&=1+\sum_{k=1}^mL_{\partial P}(k)=1+\sum_{k=1}^m\sum_{i=0}^{d-1}f_i{k-1\choose i}\\
&=1+\sum_{i=0}^{d-1}f_i\sum_{k=1}^m{k-1\choose i}\\
&=1+\sum_{i=0}^{d-1}f_i{m\choose i+1}\\
&=\sum_{i=-1}^{d-1}f_i{m\choose i+1}.
\end{align*}
\end{proof}

The $f$-vectors of low-dimensional smooth polytopes were calculated in~\cite[Theorem~4.2]{HK10}. As a consequence we obtain the following formulae for the Ehrhart polynomial:

\begin{cor}\label{cor:smooth_Ehrhart_polys}
Let $P$ be a $d$-dimensional smooth polytope. Define $b_2:=\abs{\partial(2P)\cap\Z^d}$.\\
If $d=2$ then
$$L_P(m)=1+\frac{1}{2}f_0m+\frac{1}{2}f_0m^2.$$
If $d=3$ then
$$L_P(m)=1+\frac{1}{6}(f_0+10)m+\frac{1}{2}(f_0-2)m^2+\frac{1}{3}(f_0-2)m^3$$
If $d=4$ then
$$L_P(m)=1+\frac{1}{12}(8f_0-b_2)m+\frac{1}{24}(14f_0-b_2)m^2-\frac{1}{12}(2f_0-b_2)m^3-\frac{1}{24}(2f_0-b_2)m^4.$$
If $d=5$ then
\begin{align*}
L_P(m)=1+\frac{1}{60}(14f_0-b_2+94)m+\frac{1}{24}&(16f_0-b_2-30)m^2+\frac{1}{3}(f_0-2)m^3\\
&-\frac{1}{24}(4f_0-b_2-6)m^4-\frac{1}{60}(4f_0-b_2-6)m^5.
\end{align*}
\end{cor}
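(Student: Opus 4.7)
The plan is to combine Lemma~\ref{lem:smooth_point_counting}, which expresses
$$L_P(m)=\sum_{i=-1}^{d-1}f_i\binom{m}{i+1},$$
with the explicit descriptions of the $f$-vector of a smooth polytope of dimension at most five supplied by~\cite[Theorem~4.2]{HK10}. Once the $f$-vector is written in terms of the parameters $f_0$ (and, from dimension four onwards, $b_2$), the corollary reduces to a routine expansion of binomial coefficients as polynomials in $m$ and collection of like powers.

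In dimensions two and three, smoothness implies that $P$ is simplicial, and the Dehn--Sommerville relations together with Euler's formula determine the full $f$-vector from $f_0$ alone: $f_1=f_0$ when $d=2$, and $f_1=3f_0-6$, $f_2=2f_0-4$ when $d=3$. In dimensions four and five the $f$-vector has additional independent entries, so one needs a further invariant. Here I would use Lemma~\ref{lem:smooth_point_counting} to observe that
$$b_2=L_{\partial P}(2)=\sum_{i=0}^{d-1}f_i\binom{1}{i}=f_0+f_1,$$
so $f_1=b_2-f_0$; the remaining entries $f_2,f_3$ (and $f_4$ when $d=5$) are then pinned down by the smoothness relations recorded in~\cite[Theorem~4.2]{HK10}.

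With the $f$-vector expressed as a function of $f_0$ and $b_2$, I would substitute back into Lemma~\ref{lem:smooth_point_counting}, expand $\binom{m}{i+1}=\frac{m(m-1)\cdots(m-i)}{(i+1)!}$, and collect the coefficient of each power of $m$. In each dimension this yields, after straightforward simplification, the polynomial displayed in the statement.

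There is no substantive obstacle beyond the bookkeeping: the hard work has been done in~\cite[Theorem~4.2]{HK10}, and everything else is mechanical. The only point worth flagging is the consistency check that in dimensions two and three the identity $b_2=f_0+f_1$ reproduces $b_2=2f_0$ and $b_2=4f_0-6$ respectively, confirming that $b_2$ carries genuinely new information only once $d\geq 4$.
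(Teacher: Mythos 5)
Your proposal is correct and follows essentially the same route as the paper: the corollary is obtained by substituting the $f$-vectors of low-dimensional smooth polytopes from~\cite[Theorem~4.2]{HK10} into the formula of Lemma~\ref{lem:smooth_point_counting} and expanding the binomial coefficients. Your extra observations (the identity $b_2=L_{\partial P}(2)=f_0+f_1$ and the Dehn--Sommerville/Euler relations pinning down the remaining entries) simply spell out the content of the cited theorem, and your computations are consistent with the stated polynomials.
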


Casagrande provides sharp bounds on the number of vertices $f_0$ of a smooth polytope in terms of the dimension:

\begin{thm}[\cite{Cas04}]\label{thm:Casagrande}
Let $P$ be a $d$-dimensional smooth polytope. Then
$$f_0\leq\left\{\begin{array}{ll}
3d,&\text{ if $d$ is even;}\\
3d-1,&\text{ if $d$ is odd.}
\end{array}\right.$$
\end{thm}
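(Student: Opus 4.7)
The plan is to translate the problem into the language of nonsingular toric Fano varieties. Under the standard correspondence between smooth polytopes and nonsingular projective toric Fano varieties, the polytope $P$ gives rise to a variety $X_P$ whose fan has one ray through each vertex of $P$; consequently $f_0$ equals the number of torus-invariant prime divisors of $X_P$, which is $d+\rho(X_P)$, where $\rho(X_P)$ is the Picard number. The theorem is therefore equivalent to showing $\rho(X_P)\leq 2d$ in even dimension and $\rho(X_P)\leq 2d-1$ in odd dimension.

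The main tool I would use is Batyrev's calculus of primitive collections and primitive relations. Recall that a primitive collection is a minimal subset $\{v_{i_1},\ldots,v_{i_k}\}$ of vertices of $P$ which does not lie on a common facet; the smoothness hypothesis forces, for each such collection, an integral relation
$$v_{i_1}+\cdots+v_{i_k}=c_1w_1+\cdots+c_\ell w_\ell,$$
where the $w_j$ lie on a common face and the $c_j\in\Z_{>0}$. These relations span the Mori cone of $X_P$, so bounding their number bounds $\rho(X_P)$. In parallel I would exploit the antipodal pairing on vertices: call $v$ \emph{paired} if $-v$ is also a vertex. Paired vertices signal a toric $\mathbb{P}^1$-fibration factor of $X_P$, whereas unpaired vertices satisfy tight restrictions on their primitive relations. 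Fixing a ``special facet'' $F$ and reading off the lattice heights of all other vertices relative to $F$ yields an explicit bound on the number of unpaired vertices in terms of $d$.

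The argument would then proceed inductively on $d$: any $P$ approaching the bound must carry a toric $\mathbb{P}^1$-bundle structure over a smooth Fano polytope of dimension $d-1$, which together with the inductive hypothesis delivers the claimed inequality, the fibre contributing $+1$ to $\rho$. The parity discrepancy between the even and odd cases is the subtle point: a fully antipodally symmetric configuration of $2d$ primitive lattice generators is compatible with the smooth Fano fan condition only when $d$ is even, so in odd dimension at least one antipodal pair must be broken, costing exactly one vertex. I expect the main obstacle to be the rigorous identification of this $\mathbb{P}^1$-fibration structure on near-extremal polytopes, since it requires a detailed analysis of primitive relations of length two and the corresponding extremal contractions on $X_P$. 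Sharpness is then exhibited separately by explicit examples, for instance products of the smooth toric del Pezzo surface obtained by blowing up $\mathbb{P}^2$ at its three torus-fixed points (which realises $f_0=6=3\cdot 2$), with a further $\mathbb{P}^1$-factor inserted in odd dimension.
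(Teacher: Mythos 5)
You should first note that the paper does not prove this statement at all: it is imported verbatim from Casagrande [Cas04], so the only meaningful comparison is with Casagrande's published argument, which indeed proceeds through the dictionary you set up ($f_0=d+\rho(X_P)$, so the claim is $\rho\le 2d$, resp.\ $2d-1$) and through Batyrev's primitive collections and relations. Your translation, the role of primitive relations, and the sharpness examples ($S_3^{d/2}$ for $d$ even and $S_3^{(d-1)/2}\times\mathbb{P}^1$ for $d$ odd, $S_3$ the degree-six toric del Pezzo surface) are all correct, so the outline points at the right toolkit.

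As a proof, however, the proposal has genuine gaps, and two of its load-bearing claims are false as stated. First, an antipodal pair of vertices does not ``signal a toric $\mathbb{P}^1$-fibration factor'': the hexagon, the smooth polytope of $S_3$, consists of three antipodal pairs, yet $S_3$ (Picard number $4$) is neither a $\mathbb{P}^1$-bundle nor a product with a $\mathbb{P}^1$-factor. For the same reason the proposed induction --- ``any $P$ approaching the bound must carry a toric $\mathbb{P}^1$-bundle structure over a smooth Fano polytope of dimension $d-1$'' --- breaks down exactly on the extremal case, since $S_3^{d/2}$ attains $f_0=3d$ and admits no such fibration; so the inductive step cannot be run as formulated. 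Second, the parity mechanism you invoke (``a fully antipodally symmetric configuration of $2d$ primitive generators is compatible with the smooth Fano condition only when $d$ is even'') is simply wrong: the cross-polytope of $(\mathbb{P}^1)^d$ is centrally symmetric with $2d$ vertices in every dimension $d$. The genuine content of the theorem --- the bound $f_0\le 3d$ obtained by pinning down the possible primitive relations (in Casagrande's argument, those of the form $x+y=0$ and $x+y=z$, together with a count of the vertices not lying on, or at bounded height over, a fixed facet) and the identification of the equality case as $S_3^{d/2}$, which is what forces $d$ even and yields $3d-1$ in odd dimension --- is precisely the part your sketch asserts rather than proves. So the strategy is the standard one, but the decisive estimates and the equality analysis are missing, and the $\mathbb{P}^1$-bundle reduction on which your induction rests is not available.
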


We now prove Theorem~\ref{thm:Golyshev} without resorting to the classifications in dimensions~$2$ and~$3$.

\begin{prop}\label{prop:real_part_dim_2_3}
Let $P$ be a smooth polytope of dimension two or three. If $z\in\C$ is a root of $L_P(m)$ then $\Re{z}=-1/2$.
\end{prop}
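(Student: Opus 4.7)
The plan is to treat the two dimensions directly from the explicit formulae for $L_P(m)$ given in Corollary~\ref{cor:smooth_Ehrhart_polys}, combining Vieta's relations with Casagrande's upper bound on $f_0$ (Theorem~\ref{thm:Casagrande}) to show that the relevant quadratic factors have negative discriminant.

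For $d=2$, I would start with
$$L_P(m) = 1 + \tfrac{f_0}{2}m + \tfrac{f_0}{2}m^2,$$
which is a quadratic whose coefficients of $m$ and $m^2$ coincide. By Vieta's formulae, the two roots automatically sum to $-1$, so it suffices to verify that they are non-real complex conjugates. The discriminant is $\tfrac{1}{4}f_0(f_0-8)$, and since Theorem~\ref{thm:Casagrande} gives $f_0 \le 6$ (and obviously $f_0 \ge 3$), this is strictly negative. Hence both roots have real part $-1/2$.

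For $d=3$, the first step is to observe that $-1/2$ is always a root. This follows because $P$ is reflexive, so by Macdonald reciprocity $L_P(-m-1) = -L_P(m)$ (odd dimension); setting $m=-1/2$ gives $L_P(-1/2) = -L_P(-1/2) = 0$. I would then divide the cubic of Corollary~\ref{cor:smooth_Ehrhart_polys} by $(m+\tfrac12)$ to obtain
$$L_P(m) = (m+\tfrac12)\cdot\tfrac{1}{3}\bigl[(f_0-2)m^2+(f_0-2)m+6\bigr].$$
Once again the quadratic factor has equal $m$ and $m^2$ coefficients, so its roots sum to $-1$; it remains only to show the discriminant is negative. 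The discriminant equals $(f_0-2)(f_0-26)$. For any $3$-dimensional polytope $f_0\ge 4$, and Theorem~\ref{thm:Casagrande} gives $f_0 \le 8$, so $(f_0-2)(f_0-26)<0$, completing the argument.

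The whole proof reduces to writing down the Ehrhart polynomial, peeling off the forced real root $-1/2$ in the odd-dimensional case, and then checking that Casagrande's bound is comfortably below the critical threshold at which the remaining quadratic acquires real roots (namely $f_0=8$ in dimension~$2$ and $f_0=26$ in dimension~$3$). The only mildly delicate point is recognising that reflexivity (together with odd dimension) forces $-1/2$ to be a root of the cubic, which is what makes the factorisation available; the rest is a routine discriminant calculation. There is no serious obstacle, and no appeal to any classification of smooth polytopes is required.
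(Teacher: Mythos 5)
Your proof is correct, and it reaches the result by a somewhat cleaner route than the paper, although both arguments rest on the same two inputs: the explicit Ehrhart polynomials of Corollary~\ref{cor:smooth_Ehrhart_polys} and Casagrande's bound (Theorem~\ref{thm:Casagrande}). The paper writes a root as $\alpha+\beta i$ and separates real and imaginary parts: in dimension two the imaginary part immediately forces $\alpha=-1/2$, and in dimension three substituting the imaginary-part relation~\eqref{eq:dim_3_imaginary_2} into the real part produces the factor $(2\alpha+1)$ times a quadratic in $2\alpha+1$ whose discriminant $16(f_0-2)(f_0-26)$ is negative in the admissible range. You instead work with the polynomial itself: you invoke Macdonald reciprocity (valid as a polynomial identity, since it holds at infinitely many integers, so the evaluation at $m=-1/2$ is legitimate) to force the real root $-1/2$ in odd dimension, factor it out --- the factorisation $L_P(m)=(m+\tfrac12)\cdot\tfrac13\bigl[(f_0-2)m^2+(f_0-2)m+6\bigr]$ checks out against Corollary~\ref{cor:smooth_Ehrhart_polys} --- and then observe that in both dimensions the remaining quadratic has equal linear and quadratic coefficients, so Vieta gives root sum $-1$, and the negative discriminant ($\tfrac14 f_0(f_0-8)$, respectively $(f_0-2)(f_0-26)$) forces a conjugate pair, pinning both roots to the line $\Re{z}=-1/2$; Casagrande's bounds $f_0\le 6$ ($d=2$) and $f_0\le 8$ ($d=3$), together with the trivial lower bounds $f_0\ge 3$ and $f_0\ge 4$, give negativity with room to spare. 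The discriminant quantities are identical in the two treatments (the paper's condition $\beta^2>0$ is exactly your negative-discriminant condition), so nothing essentially new is required; what your version buys is the avoidance of the real/imaginary bookkeeping and a transparent explanation, via reciprocity, of why $-1/2$ must occur as a root in the odd-dimensional case, a fact the paper instead extracts by algebraic manipulation of its real and imaginary part equations.
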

\begin{proof}
$d=2$:
By Corollary~\ref{cor:smooth_Ehrhart_polys} we know that
$$L_P(m)=1+\frac{1}{2}f_0m+\frac{1}{2}f_0m^2.$$
Let $\alpha+\beta i\in\C$ be a root of $L_P$, where $\alpha,\beta\in\R$. Assume that $\beta\ne 0$. By considering the imaginary part we obtain
$$\beta(1+2\alpha)=0,$$
hence $\alpha=-1/2$ as required. The real part simplifies to
$$\beta^2=\frac{2}{f_0}-\frac{1}{4}.$$
Theorem~\ref{thm:Casagrande} tells us that this is always positive, thus we obtain both roots of $L_P$.

$d=3$:
In this case Corollary~\ref{cor:smooth_Ehrhart_polys} tells us that
$$L_P(m)=1+\frac{1}{6}(f_0+10)m+\frac{1}{2}(f_0-2)m^2+\frac{1}{3}(f_0-2)m^3,$$
giving real and imaginary parts:
\begin{equation}\label{eq:dim_3_real}
1+\frac{1}{6}(f_0+10)\alpha+\frac{1}{2}(f_0-2)(\alpha^2-\beta^2)+\frac{1}{3}(f_0-2)(\alpha^2-3\beta^2)\alpha=0,
\end{equation}
\begin{equation}\label{eq:dim_3_imaginary}
\frac{1}{6}(f_0+10)\beta+(f_0-2)\alpha\beta+\frac{1}{3}(f_0-2)(3\alpha^2-\beta^2)\beta=0.
\end{equation}

Assume that $\beta\neq 0$. Equation~\eqref{eq:dim_3_imaginary} gives us
\begin{equation}\label{eq:dim_3_imaginary_2}
(f_0-2)\beta^2=\frac{1}{2}f_0+5+3(f_0-2)\alpha+3(f_0-2)\alpha^2.
\end{equation}
Substituting~\eqref{eq:dim_3_imaginary_2} into~\eqref{eq:dim_3_real} gives
$$\frac{1}{12}(2\alpha+1)\left(4(f_0-2)(2\alpha+1)^2+26-f_0\right).$$
Clearly $\alpha=-1/2$ is one possible solution. The discriminant of $4(f_0-2)(2\alpha+1)^2+26-f_0$, regarded as a quadratic in $2\alpha+1$, is $16(f_0-2)(f_0-26)$. This is negative when $2\leq f_0\leq 26$, and by Theorem~\ref{thm:Casagrande} this covers all possible values of $f_0$. Hence $\alpha=-1/2$ is the only solution. The values for $\beta$ are determined by~\eqref{eq:dim_3_imaginary_2}:
$$\beta^2=\frac{26-f_0}{4f_0-8}.$$

If we allow $\beta=0$ then~\eqref{eq:dim_3_real} becomes
$$\frac{1}{24}(2\alpha+1)\left((f_0-2)(2\alpha+1)^2+26-f_0\right).$$
Once more the discriminant of the quadratic component tells us that the only solution is when $\alpha=-1/2$.
\end{proof}

The proof of Proposition~\ref{prop:real_part_dim_2_3} gives us explicit equations for the roots of $L_P$.

\begin{cor}\label{cor:explicit_dim_2_and_3}
Let $P$ be a smooth $d$-dimensional polytope, and suppose that $z=-1/2+\beta i\in\C$ is a root of $L_P$. If $d=2$ then
$$\beta^2=-\frac{1}{4}+\frac{2}{f_0}.$$
If $d=3$ then $\beta=0$ or
$$\beta^2=-\frac{1}{4}+\frac{6}{f_0-2}.$$
\end{cor}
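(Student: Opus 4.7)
The plan is to read off the formulas directly from the proof of Proposition~\ref{prop:real_part_dim_2_3}, since that proof already solves for $\beta^2$ once $\alpha=-1/2$ has been established; the corollary is essentially a restatement.

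For $d=2$, the proof of Proposition~\ref{prop:real_part_dim_2_3} shows that after substituting $\alpha=-1/2$ into the real part of $L_P(\alpha+\beta i)=0$ one obtains $\beta^2=\frac{2}{f_0}-\frac{1}{4}$, which is exactly the stated formula. So I would simply quote that step.

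For $d=3$, the proof established that either $\beta=0$ is a root, or $\beta\neq 0$ and $\alpha=-1/2$. In the latter case, substituting $\alpha=-1/2$ into equation~\eqref{eq:dim_3_imaginary_2} yields
\[
\beta^2=\frac{26-f_0}{4f_0-8}.
\]
The plan is then to verify the elementary identity
\[
\frac{26-f_0}{4f_0-8}=-\frac{1}{4}+\frac{6}{f_0-2},
\]
by bringing the right-hand side over a common denominator of $4(f_0-2)$; this is a one-line check.

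There is no genuine obstacle: the only thing to do beyond citing the preceding proposition's proof is the algebraic rearrangement in the $d=3$ case, and noting that Theorem~\ref{thm:Casagrande} together with the condition $f_0\ge d+1$ ensures the denominator $f_0-2$ is positive so the expression is well defined.
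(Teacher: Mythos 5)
Your proposal is correct and matches the paper exactly: the paper derives the corollary by simply reading off $\beta^2=\tfrac{2}{f_0}-\tfrac14$ (for $d=2$) and $\beta^2=\tfrac{26-f_0}{4f_0-8}$ (for $d=3$) from the proof of Proposition~\ref{prop:real_part_dim_2_3}, and your one-line rearrangement $\tfrac{26-f_0}{4(f_0-2)}=-\tfrac14+\tfrac{6}{f_0-2}$ is the only algebra needed.
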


\section{Dimensions Four and Five}\label{sec:dim_4_and_5}
In order to prove Theorem~\ref{thm:Golyshev} in dimension~$4$ we require a some additional results. Throughout we write $b_2:=\abs{\partial(2P)\cap\Z^d}$, where $d$ is the dimension of $P$.

\begin{lem}[\protect{\cite[Corollary~4.4]{HK10}}]\label{lem:bounds_dim_4_1}
Let $P$ be a four-dimensional smooth polytope. Then
$$5f_0-10\le b_2\le 5f_0.$$
\end{lem}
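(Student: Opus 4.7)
The plan is to reduce both inequalities to bounds on the $f$-vector of $P$: first I would show that $b_2 = f_0 + f_1$ using the smoothness of $P$, and then bound $f_1$ using combinatorial inequalities for simplicial and smooth $4$-polytopes.

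To obtain $b_2 = f_0 + f_1$, I would partition the lattice points of $\partial(2P)$ by the relative interior of the face of $2P$ in which each lies. Because $P$ is smooth, every face of $P$ is a unimodular simplex; after a change of basis, an $i$-face of $2P$ is the standard $i$-simplex with vertices $0, 2e_1, \ldots, 2e_i$. Lattice points in its relative interior correspond to positive integer tuples $(a_1, \ldots, a_i)$ with $\sum a_j < 2$, which is empty for $i \ge 2$, consists of a single midpoint for $i = 1$, and is the vertex itself for $i = 0$. Summing across all faces of $P$ gives $b_2 = f_0 + f_1$. Since a smooth polytope (in the sense of Definition~\ref{defn:smooth}) is simplicial, Barnette's Lower Bound Theorem applies and gives $f_1 \ge d f_0 - \binom{d+1}{2}$, which in dimension four reads $f_1 \ge 4f_0 - 10$ and hence $b_2 \ge 5f_0 - 10$.

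The upper bound $b_2 \le 5 f_0$, equivalently $f_1 \le 4 f_0$, is where I expect the main difficulty: it is not forced by simpliciality alone and must use the smoothness (Fano) structure. One route is to extract the $h^*$-vector directly from Corollary~\ref{cor:smooth_Ehrhart_polys}, a short calculation giving
$$h^*(P) = (1,\, f_0 - 4,\, b_2 - 4 f_0 + 6,\, f_0 - 4,\, 1),$$
and then to establish the inequality $h^*_2 \le h^*_1 + 10$ for smooth reflexive $4$-polytopes. This might follow either from a Hodge-theoretic inequality on the even Betti numbers of the smooth toric Fano 4-fold associated to $P$, or from a direct combinatorial bound on the average vertex degree (at most $8$) in the $1$-skeleton, via $2f_1 = \sum_v \deg(v)$. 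Without some such structural input distinguishing smoothness from mere simpliciality I do not see how to close the gap, so I expect this step to be the bulk of the work.
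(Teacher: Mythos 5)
This lemma is not proved in the paper at all: it is imported verbatim from \cite[Corollary~4.4]{HK10}, so the only fair test of your proposal is whether it is a complete self-contained argument. It is not. Your first half is fine: the identity $b_2=f_0+f_1$ is correct (it also falls straight out of Lemma~\ref{lem:smooth_point_counting} by setting $m=2$ in $L_{\partial P}$, since ${1\choose i}=0$ for $i\ge 2$), and combining it with Barnette's Lower Bound Theorem $f_1\ge 4f_0-10$ for simplicial $4$-polytopes does give $b_2\ge 5f_0-10$. That part stands.

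The genuine gap is the upper bound $b_2\le 5f_0$, equivalently $f_1\le 4f_0$, which you explicitly leave open. None of your suggested routes closes it. The $h^*$-reformulation $h^*_2\le h^*_1+10$ is, as you note, literally the same inequality rewritten (with $h^*_1=f_0-4$, $h^*_2=b_2-4f_0+6$), so it is circular unless one supplies an independent proof of that inequality; the remark that it ``might follow from a Hodge-theoretic inequality on the even Betti numbers'' of the toric Fano $4$-fold is an unsubstantiated hope, not a cited or proved statement. Likewise, ``average vertex degree at most $8$'' is not an available input: via $2f_1=\sum_v\deg(v)$ it is exactly equivalent to $f_1\le 4f_0$, i.e.\ it is the claim to be proved. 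And the bound really does require the smooth Fano structure, not just simpliciality or reflexivity: neighborly simplicial $4$-polytopes have $f_1={f_0\choose 2}>4f_0$ once $f_0\ge 10$, so no purely face-combinatorial argument of the kind you use for the lower bound can work. As written, half of the lemma --- the half where the hypothesis genuinely matters, and the half used later to control the sign of the $\beta^2$-coefficient in~\eqref{eq:dim_4_condition} --- remains unproven, so the proposal is incomplete; one either needs the argument of \cite{HK10} or some other structural result about smooth Fano polytopes (e.g.\ the classification) to finish.
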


\begin{lem}\label{lem:bounds_dim_4_2}
Let $P$ be a four-dimensional smooth polytope. Then
$$(b_2-8f_0)^2>24(b_2-2f_0).$$
\end{lem}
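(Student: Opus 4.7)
The plan is to reduce the inequality, by completing the square, to a one-sided bound on $b_2$, and then verify this bound using Lemma~\ref{lem:bounds_dim_4_1} together with Theorem~\ref{thm:Casagrande}; a handful of small cases will require tighter bounds coming from the simpliciality of $P$.

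First, I would rewrite
\[
(b_2 - 8 f_0)^2 - 24(b_2 - 2 f_0) = (b_2 - 8 f_0 - 12)^2 - 144 (f_0 + 1),
\]
so that the inequality becomes $\abs{b_2 - 8 f_0 - 12} > 12 \sqrt{f_0 + 1}$. Since Lemma~\ref{lem:bounds_dim_4_1} gives $b_2 \le 5 f_0$, the quantity $b_2 - 8 f_0 - 12$ is negative, and the inequality further reduces to
\[
b_2 < 8 f_0 + 12 - 12 \sqrt{f_0 + 1}. \qquad (\ast)
\]

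Second, if $f_0 \ge 9$, then the upper bound $b_2 \le 5 f_0$ from Lemma~\ref{lem:bounds_dim_4_1} is already strong enough: the inequality $5 f_0 < 8 f_0 + 12 - 12 \sqrt{f_0 + 1}$ is, after isolating and squaring, equivalent to $f_0 (f_0 - 8) > 0$, which holds. Theorem~\ref{thm:Casagrande} yields $f_0 \le 12$, and we trivially have $f_0 \ge d+1 = 5$, so it remains only to treat $f_0 \in \{5, 6, 7, 8\}$.

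Third, for these residual cases, Lemma~\ref{lem:bounds_dim_4_1} is not quite sharp enough---most acutely at $f_0 = 8$, where it gives only the non-strict bound $b_2 \le 40 = 8 f_0 + 12 - 12 \sqrt{f_0 + 1}$. The fix is to invoke simpliciality: any smooth polytope is simplicial, so $f_1 \le \binom{f_0}{2}$, and Lemma~\ref{lem:smooth_point_counting} at $m = 2$ gives $b_2 = L_{\partial P}(2) = f_0 + f_1 \le \binom{f_0 + 1}{2}$. A direct substitution confirms $\binom{f_0 + 1}{2} < 8 f_0 + 12 - 12 \sqrt{f_0 + 1}$ for each $f_0 \in \{5, 6, 7, 8\}$, which verifies $(\ast)$.

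The main obstacle is recognising that the boundary value $f_0 = 8$ requires strengthening Lemma~\ref{lem:bounds_dim_4_1}; once simpliciality is employed, the remaining verification is a short finite check.
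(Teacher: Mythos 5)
Your proof is correct, but it follows a genuinely different route from the paper's. You complete the square, reduce the claim to the single bound $b_2<8f_0+12-12\sqrt{f_0+1}$, and verify it by combining the upper bound $b_2\le 5f_0$ of Lemma~\ref{lem:bounds_dim_4_1} (which suffices exactly when $f_0\ge 9$) with Theorem~\ref{thm:Casagrande} and, for $5\le f_0\le 8$, the edge-counting estimate $b_2=f_0+f_1\le\binom{f_0+1}{2}$ coming from Lemma~\ref{lem:smooth_point_counting} at $m=2$. The paper instead argues from Lemma~\ref{lem:bounds_dim_4_1} alone, writing $(b_2-8f_0)^2=(b_2-16f_0)b_2+64f_0^2$ and bounding the right-hand side below by its value at $b_2=5f_0-10$; but the quadratic $(b_2-16f_0)b_2$ has its vertex at $b_2=8f_0$ and is therefore decreasing on $[5f_0-10,5f_0]$, so evaluating at the left endpoint gives an upper bound, not a lower one --- for the realizable pair $(f_0,b_2)=(12,60)$ one has $(b_2-8f_0)^2=1296<2116=(3f_0+10)^2$. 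The true minimum, at $b_2=5f_0$, yields only $(b_2-8f_0)^2\ge 9f_0^2$, which dominates $24(b_2-2f_0)\le 72f_0$ precisely when $f_0\ge 9$; and in fact no argument using Lemma~\ref{lem:bounds_dim_4_1} alone can close the small cases, since $(f_0,b_2)=(5,25)$ satisfies $5f_0-10\le b_2\le 5f_0$ yet $(b_2-8f_0)^2=225<360=24(b_2-2f_0)$. So your extra ingredient $b_2\le\binom{f_0+1}{2}$ for $5\le f_0\le 8$ is not a convenience but exactly the missing information, and your argument in effect repairs a gap in the paper's own proof; note that the same ingredient ($f_1\le\binom{f_0}{2}$) is already used by the paper in the five-dimensional Lemma~\ref{lem:dim_5_bound_2}, so it is squarely within its toolkit. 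What the paper's approach buys, when it works, is a two-line computation with no case analysis; what yours buys is a transparent reduction to a monotone bound on $b_2$, an explicit identification of where Lemma~\ref{lem:bounds_dim_4_1} stops being sharp enough, and a correct proof.
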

\begin{proof}
From Lemma~\ref{lem:bounds_dim_4_1} we have that
\begin{align*}
(b_2-8f_0)^2&=(b_2-16f_0)b_2+64f_0^2\\
&\geq(10-5f_0)(10+11f_0)+64f_0^2\\
&=9f_0^2+60f_0+100\\
&=(3f_0+10)^2
\end{align*}
Clearly $72f_0<(3f_0+10)^2$, and since $24(b_2-2f_0)\leq 72f_0$ (by Lemma~\ref{lem:bounds_dim_4_1}) we obtain the result.
\end{proof}

We shall also make use of the following trivial observation:

\begin{lem}\label{lem:deg_4_roots}
Let $g(x):=ax^4+bx^2+c\in\R[x]$ be a polynomial such that $a>0$, $b<0$, $c>0$ and $b^2-4ac>0$. Then $g$ has four distinct real roots.
\end{lem}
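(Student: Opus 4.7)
The plan is to reduce to the quadratic case via the standard biquadratic substitution $y=x^2$, and then read off the signs of the roots of the resulting quadratic from the sign hypotheses on the coefficients.

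First I would set $h(y):=ay^2+by+c$, so that $g(x)=h(x^2)$. The hypothesis $b^2-4ac>0$ is exactly the statement that $h$ has two distinct real roots $y_1,y_2\in\R$. Next I would use Vieta's formulas: $y_1+y_2=-b/a$ and $y_1y_2=c/a$. Since $a>0$ and $b<0$, the sum $-b/a$ is strictly positive, and since $a,c>0$ the product $c/a$ is strictly positive. A pair of distinct real numbers whose sum and product are both positive must themselves both be positive, so $y_1,y_2>0$.

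Finally, once both roots of $h$ are positive and distinct, the four numbers $\pm\sqrt{y_1}$ and $\pm\sqrt{y_2}$ are four distinct real numbers, each of which satisfies $g(x)=h(x^2)=0$. Since $g$ has degree four these account for all the roots, so $g$ has four distinct real roots.

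There is no real obstacle here: the entire argument is a two-line application of Vieta's formulas to the quadratic obtained by the substitution $y=x^2$, and the sign hypotheses on $a,b,c$ have been chosen exactly so that both roots of that quadratic are forced to lie in $\R_{>0}$.
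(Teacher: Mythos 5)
Your proof is correct and complete. The paper gives no argument for this lemma at all---it is introduced only as a ``trivial observation''---and your substitution $y=x^2$ followed by Vieta's formulas (positive sum $-b/a$ and positive product $c/a$ forcing both roots of the quadratic to be positive and distinct, hence yielding the four distinct real roots $\pm\sqrt{y_1},\pm\sqrt{y_2}$) is exactly the standard reasoning the authors leave implicit, so nothing needs to be added.
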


\begin{prop}\label{prop:real_part_dim_4}
Let $P$ be a four-dimensional smooth polytope. If $z\in\C$ is a root of $L_P(m)$ then $\Re{z}=-1/2$.
\end{prop}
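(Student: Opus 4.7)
The plan is to exploit the reciprocity $L_P(-m-1)=L_P(m)$ that holds because every smooth polytope is reflexive (and $d=4$ is even). Consequently the shifted polynomial $Q(w):=L_P(w-1/2)$ is even in $w$, and since a root $z$ of $L_P$ satisfies $\Re{z}=-1/2$ precisely when $w=z+1/2$ is purely imaginary, the proposition reduces to showing that every root of $Q$ is purely imaginary.

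First I would substitute $m=w-1/2$ into the expression for $L_P$ provided by Corollary~\ref{cor:smooth_Ehrhart_polys} and extract the coefficients of $Q(w)=Aw^4+Bw^2+C$; a direct expansion should give
$$A=\frac{b_2-2f_0}{24},\quad B=\frac{34f_0-5b_2}{48},\quad C=\frac{9b_2-66f_0+384}{384}.$$
Then, setting $w=ix$, the polynomial $g(x):=Q(ix)=Ax^4-Bx^2+C$ has all roots real exactly when $Q$ has all roots purely imaginary. Lemma~\ref{lem:deg_4_roots} supplies this conclusion provided $A>0$, $-B<0$, $C>0$, and $B^2-4AC>0$.

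The three sign conditions on $A$, $B$, $C$ are dispatched routinely using Lemma~\ref{lem:bounds_dim_4_1} (which gives $5f_0-10\le b_2\le 5f_0$) together with the easy bound $f_0\ge 5$ (for $A$) and Casagrande's bound $f_0\le 12$ from Theorem~\ref{thm:Casagrande} (for $C$). The main technical obstacle is the discriminant inequality $B^2-4AC>0$. Plugging in the values above and clearing denominators, this should collapse, up to a positive scalar, to
$$(b_2-8f_0)^2>24(b_2-2f_0),$$
which is exactly the content of Lemma~\ref{lem:bounds_dim_4_2}; this is the reason that lemma was isolated in advance. With the discriminant inequality in hand, Lemma~\ref{lem:deg_4_roots} produces four distinct real roots of $g$, hence four distinct purely imaginary roots of $Q$, and therefore four roots of $L_P$ on the line $\Re{z}=-1/2$.
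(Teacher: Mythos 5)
Your argument is correct and is essentially the paper's proof in a slightly different packaging: your biquadratic $Q(w)=Aw^4+Bw^2+C$, evaluated at $w=i\beta$ and cleared of denominators, is exactly equation~\eqref{eq:dim_4_condition}, and the decisive steps --- positive leading and constant coefficients and negative middle coefficient via Lemma~\ref{lem:bounds_dim_4_1} (with $f_0\ge 5$ and Theorem~\ref{thm:Casagrande}), the discriminant inequality via Lemma~\ref{lem:bounds_dim_4_2}, then Lemma~\ref{lem:deg_4_roots} together with the degree-four count --- coincide with the paper's. The only cosmetic difference is that you derive the biquadratic from reciprocity (evenness of $L_P(w-1/2)$ for even $d$) rather than by separating real and imaginary parts and substituting $\alpha=-1/2$, which is a clean and valid shortcut.
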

\begin{proof}
In four dimensions the Ehrhart polynomial simplifies to
$$L_P(m)=1+\frac{1}{12}(8f_0-b_2)m(m+1)-\frac{1}{24}(2f_0-b_2)m^2(m+1)^2.$$
If $z=\alpha+i\beta$ is a root of $L_P$ then, by considering the real and imaginary parts, we obtain
\begin{equation}\label{eq:dim_4_real}
24+12f_0((\alpha+1)\alpha-\beta^2)-(2f_0-b_2)\alpha(\alpha+1)(\alpha(\alpha+1)-2-6\beta^2)-(2f_0-b_2)\beta^2(\beta^2+1)=0,
\end{equation}
\begin{equation}\label{eq:dim_4_imaginary}
\left(6f_0-(2f_0-b_2)\left((\alpha+1)\alpha-\beta^2-1\right)\right)(2\alpha+1)\beta=0.
\end{equation}

Clearly $\alpha=-1/2$ is a possible solution to equation~\eqref{eq:dim_4_imaginary}, in which case $\beta$ satisfies (by~\eqref{eq:dim_4_real})
\begin{equation}\label{eq:dim_4_condition}
16(b_2-2f_0)\beta^4+8(5b_2-34f_0)\beta^2+3(128+3b_2-22f_0)=0.
\end{equation}
This quadratic in $\beta^2$ has distinct real solutions if and only if
$$(b_2-8f_0)^2-24(b_2-2f_0)>0.$$
By Lemma~\ref{lem:bounds_dim_4_2} we know that this is always true.

Now we consider the signs of the coefficients of~\eqref{eq:dim_4_condition}. The leading coefficient is equal to $1/2f_2$, and so is positive. The coefficient of $\beta^2$ is always negative by Lemma~\ref{lem:bounds_dim_4_1}, and the constant term is positive by Lemma~\ref{lem:bounds_dim_4_2}. Hence, by Lemma~\ref{lem:deg_4_roots}, there are four distinct real solutions to equation~\eqref{eq:dim_4_real}.

We have found four distinct roots when $\Re{z}=-1/2$. Since $L_P$ is of degree four, we are done.
\end{proof}

Finally we consider dimension five.

\begin{lem}[\protect{\cite[Corollary~4.4]{HK10}}]\label{lem:dim_5_bound_1}
Let $P$ be a five-dimensional smooth polytope. Then
$$42f_0-105\leq 7b_2\leq 52f_0-90.$$
\end{lem}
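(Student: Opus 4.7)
The plan is to translate the bounds on $b_2$ into constraints on the $f$-vector, then combine Dehn--Sommerville theory with the smoothness hypothesis. Applying Lemma~\ref{lem:smooth_point_counting} at $m = 2$ gives
$$b_2 = L_{\partial P}(2) = f_0 + f_1,$$
since $\binom{1}{i} = 0$ for $i \geq 2$; geometrically every boundary lattice point of $2P$ is either $2v$ for a vertex $v$ of $P$ or $v+w$ for an edge $[v,w]$, and smoothness ensures these are distinct and exhaustive. The claimed inequalities therefore become $5 f_0 - 15 \leq f_1 \leq (45 f_0 - 90)/7$.

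Smoothness forces $P$ to be simplicial, so Dehn--Sommerville on its boundary $4$-sphere pins down $h_0 = h_5 = 1$, $h_1 = h_4 = f_0 - 5$, and $h_2 = h_3 = f_1 - 4 f_0 + 10$, and expresses $f_2, f_3, f_4$ as linear functions of $f_0$ and $f_1$ (notably $f_2 = 4 f_1 - 10 f_0 + 20$). For the lower bound I would invoke the Generalised Lower Bound Theorem: the $g$-vector of a simplicial polytope is non-negative, so $h_2 \geq h_1$, which rearranges directly to $f_1 \geq 5 f_0 - 15$, tight at the $5$-simplex.

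The upper bound is the main obstacle. Dehn--Sommerville plus the $g$-theorem give only the quadratic bound $f_1 \leq 5 f_0 - 15 + \binom{f_0 - 5}{2}$, which already exceeds $(45 f_0 - 90)/7$ once $f_0 \geq 12$ (well within Casagrande's range $f_0 \leq 14$), so smoothness must enter essentially. My approach is to work locally at each vertex $v$: because every facet of $P$ through $v$ is a unimodular $4$-simplex, the primitive edge directions at $v$ generate a smooth simplicial $4$-fan, and the vertex figure $P_v$ is combinatorially a smooth $4$-polytope to which Lemma~\ref{lem:bounds_dim_4_1} applies. Summing over $v$, using $\sum_v f_0(P_v) = 2 f_1$ and $\sum_v f_1(P_v) = 3 f_2$, then substituting the Dehn--Sommerville expression for $f_2$, yields a linear upper bound on $f_1$. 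The crux is sharpness: using only $f_1(P_v) \leq 4 f_0(P_v)$ gives $f_1 \leq (15 f_0 - 30)/2$, which is a factor of $7/6$ too weak, so a more delicate local argument---perhaps pairing both halves of Lemma~\ref{lem:bounds_dim_4_1} simultaneously, or exploiting additional structure at low-degree vertices---is required to reach $(45 f_0 - 90)/7$.
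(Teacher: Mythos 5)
First, note that the paper does not prove this lemma at all: it is imported verbatim from \cite[Corollary~4.4]{HK10}, so there is no internal argument to compare yours against, and a proof supplied here would have to be self-contained or defer to that reference. Within your own attempt, the reduction $b_2=L_{\partial P}(2)=f_0+f_1$ via Lemma~\ref{lem:smooth_point_counting} is correct, and the lower bound is genuinely established: Dehn--Sommerville for the simplicial boundary $4$-sphere gives $h_1=f_0-5$, $h_2=f_1-4f_0+10$, and $h_2\geq h_1$ (Lower Bound/$g$-theorem) yields $f_1\geq 5f_0-15$, i.e.\ $7b_2\geq 42f_0-105$, tight at the simplex. That half is fine.

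The upper bound, which is the substantive content of the lemma, is where the gap lies, and it is twofold. First, the local step is unsound as stated: the vertex figure of a smooth $5$-polytope at $v$ is the quotient fan of the star of the ray through $v$, which is a complete smooth simplicial $4$-fan but need not be the normal/face fan of a \emph{smooth polytope} in the sense of Definition~\ref{defn:smooth} (the corresponding toric divisor $D_v$ is smooth but generally not Fano). Lemma~\ref{lem:bounds_dim_4_1} is not a purely combinatorial statement about simplicial $4$-polytopes --- in combinatorial form it says $f_1\leq 4f_0$, which fails badly for, e.g., cyclic $4$-polytopes --- so it cannot be applied to $P_v$ without an argument that $P_v$ is itself a smooth Fano $4$-polytope, which is false in general. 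Second, even granting that step, your own computation shows it only yields $3f_2\leq 8f_1$, hence (with $f_2=4f_1-10f_0+20$) $f_1\leq \tfrac{15}{2}(f_0-2)$, which is strictly weaker than the required $f_1\leq \tfrac{45}{7}(f_0-2)$; you acknowledge this and defer the missing ``more delicate local argument'' without supplying it. So the proposal does not prove the stated inequality: the right-hand bound $7b_2\leq 52f_0-90$ remains unproven, and the intended route to it rests on an unjustified (indeed generally false) reduction to the four-dimensional lemma. The honest options are either to cite \cite[Corollary~4.4]{HK10} as the paper does, or to find an argument that uses the lattice (Fano) structure globally rather than only combinatorics plus a vertex-figure bound.
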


\begin{lem}\label{lem:dim_5_bound_2}
Let $P$ be a five-dimensional smooth polytope. Then
$$100(f_0-2)^2+(6+b_2-4f_0)^2> 20(6+b_2-4f_0)(f_0+4).$$
\end{lem}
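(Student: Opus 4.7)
The strategy follows that of Lemma~\ref{lem:bounds_dim_4_2}. Setting $p := 6+b_2-4f_0$, completing the square in $p$ gives
\[
100(f_0-2)^2 + p^2 - 20p(f_0+4) = \bigl(p-10(f_0+4)\bigr)^2 - 100\bigl((f_0+4)^2-(f_0-2)^2\bigr) = \bigl(p-10(f_0+4)\bigr)^2 - 1200(f_0+1),
\]
so the claim is equivalent to $(b_2-14f_0-34)^2 > 1200(f_0+1)$. The upper bound $7b_2 \leq 52f_0-90$ from Lemma~\ref{lem:dim_5_bound_1} shows that $b_2-14f_0-34 \leq -(46f_0+328)/7 < 0$, so the absolute value $|b_2-14f_0-34|$ is at least $(46f_0+328)/7$. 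Squaring, it then suffices to prove the single-variable polynomial inequality
\[
(46f_0+328)^2 > 58800(f_0+1),
\]
which after clearing factors simplifies to $529f_0^2 - 7156f_0 + 12196 > 0$.

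One verifies (its discriminant is the perfect square $5040^2$) that this quadratic is positive precisely when $f_0 < 2$ or $f_0 > 12196/1058 \approx 11.53$. Combined with Casagrande's bound $f_0 \leq 14$ in Theorem~\ref{thm:Casagrande}, the argument is complete whenever $f_0 \in \{12,13,14\}$. The main obstacle is the residual range $f_0 \in \{6,\ldots,11\}$, where the upper bound provided by Lemma~\ref{lem:dim_5_bound_1} alone is not sharp enough to force $(46f_0+328)^2/49 > 1200(f_0+1)$. To close this gap, one would either need to refine the upper bound on $b_2$ by exploiting additional face-counting data for smooth five-dimensional polytopes with small $f_0$, or enumerate the finitely many admissible pairs $(f_0,b_2)$ and verify the inequality directly in each case. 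The algebraic steps (the completion of the square and the reduction to a quadratic in $f_0$) are routine; the delicate work lies in securing the strict inequality for these small values of $f_0$.
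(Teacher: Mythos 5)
Your algebra is sound and, up to a cosmetic difference in how the square is completed, it follows the paper's route: the paper rewrites the claim as $\bigl(13(f_0-2)-(b_2-f_0)\bigr)\bigl(13(f_0-2)-(b_2-f_0)+120\bigr)>1200(f_0-2)$, which is literally the same inequality as your $(b_2-14f_0-34)^2>1200(f_0+1)$ (shift by $60$ and expand), and the crude bound $7b_2\le 52f_0-90$ from Lemma~\ref{lem:dim_5_bound_1} yields in both treatments exactly the factor $529f_0-6098$, hence the same threshold $f_0\ge 12$. The problem is that you stop there: the range $6\le f_0\le 11$ is not an incidental loose end but the substantive part of the statement, and leaving it as ``refine the bound or enumerate the pairs'' is a genuine gap. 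Enumerating admissible $(f_0,b_2)$ would moreover require {\O}bro's classification, which the paper is explicitly structured to avoid.

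The missing ingredient is the identity $b_2-f_0=f_1$ together with the simplicial bound $f_1\le\binom{f_0}{2}$, giving $b_2\le f_0(f_0+1)/2$; this is sharper than Lemma~\ref{lem:dim_5_bound_1} precisely for small $f_0$ and closes the gap. In your own formulation: since $b_2-14f_0-34<0$, you need $14f_0+34-b_2>\sqrt{1200(f_0+1)}$, and with $b_2\le f_0(f_0+1)/2$ it suffices to check
$$\bigl(28f_0+68-f_0(f_0+1)\bigr)^2>4800(f_0+1)\quad\text{for }6\le f_0\le 11,$$
which holds in each case (e.g.\ $f_0=11$ gives $244^2=59536>57600$; $f_0=6$ gives $194^2=37636>33600$). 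This is exactly how the paper finishes: it shows $b_2<2(161f_0^2+219f_0-662)/(23f_0+374)$ would suffice, reduces this via $b_2\le f_0(f_0+1)/2$ to the sign of the cubic $23f_0^3-247f_0^2-502f_0+2648$, and verifies that cubic is negative on $6\le f_0\le 11$. Without some such additional face-counting input, your argument proves the lemma only for $f_0\ge 12$.
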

\begin{proof}
We begin by observing that the statement is equivalent to
$$\big(10(f_0-2)-(6+b_2-4f_0)\big)^2>120(6+b_2-4f_0),$$
which in turn is equivalent to
$$\big(13(f_0-2)-(b_2-f_0)\big)\big(13(f_0-2)-(b_2-f_0)+120\big)>1200(f_0-2).$$

From Lemma~\ref{lem:dim_5_bound_1} we have that
$$13(f_0-2)-(b_2-f_0)\geq\frac{46}{7}f_0-\frac{92}{7},$$
which is always positive since $f_0\geq 6$. Hence
\begin{align*}
\big(13(f_0-2)&-(b_2-f_0)\big)\big(13(f_0-2)-(b_2-f_0)+120\big)-1200(f_0-2)\\
&\geq\big(\frac{46}{7}f_0-\frac{92}{7}\big)\big(\frac{46}{7}f_0-\frac{92}{7}+120\big)-1200(f_0-2)\\
&=\frac{4}{49}(f_0-2)(529f_0-6098).
\end{align*}
This is positive for all $f_0\geq 12$.

To prove the inequality when $f_0\le 11$ we consider
\begin{align*}
\big(13(f_0-2)&-(b_2-f_0)\big)\big(13(f_0-2)-(b_2-f_0)+120\big)-1200(f_0-2)\\
&\geq\big(13(f_0-2)-(b_2-f_0)\big)\big(\frac{46}{7}f_0-\frac{92}{7}+120\big)-1200(f_0-2)\\
&=-\frac{2}{7}(23f_0+374)b_2+\frac{4}{7}(161f_0^2+219f_0-662).
\end{align*}
We wish to show that
$$-\frac{2}{7}(23f_0+374)b_2+\frac{4}{7}(161f_0^2+219f_0-662)>0$$
whenever $6\leq f_0\leq 11$. It is enough to prove that, in the given range,
\begin{equation}\label{eq:dim_5_bound_2_11}
b_2<\frac{2(161f_0^2+219f_0-662)}{23f_0+374}.
\end{equation}

Now
$$b_2-f_0=f_1\leq{f_0\choose 2},$$
and so
$$b_2\leq\frac{f_0(f_0+1)}{2}.$$
We shall show that
$$\frac{f_0(f_0+1)}{2}<\frac{2(161f_0^2+219f_0-662)}{23f_0+374}.$$
But this is trivial; the cubic
\begin{align*}
f_0(f_0+1)&(23f_0+374)-4(161f_0^2+219f_0-662)\\
&=23f_0^3-247f_0^2-502f_0+2648
\end{align*}
is negative when $6\leq f_0\leq 11$, hence equation~\eqref{eq:dim_5_bound_2_11} holds.
\end{proof}

\begin{prop}\label{prop:real_part_dim_5}
Let $P$ be a five-dimensional smooth polytope. If $z\in\C$ is a root of $L_P(m)$ then $\Re{z}=-1/2$.
\end{prop}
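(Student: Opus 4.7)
The plan is to mirror the proof of Proposition~\ref{prop:real_part_dim_4}, exploiting the simplification that for odd $d=5$ Macdonald reciprocity forces $z=-1/2$ to be a root automatically: $L_P(-1-m)=-L_P(m)$ gives $L_P(-1/2)=0$. It therefore suffices to produce four further distinct roots with real part $-1/2$, so that together with $-1/2$ they account for all roots of the degree-$5$ polynomial $L_P$.

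The first step is to use reciprocity to factor $L_P$ in the form
\begin{equation*}
L_P(m)=\tfrac{1}{120}(2m+1)\bigl(a\,(m(m+1))^2+b\,m(m+1)+c\bigr),
\end{equation*}
with $a=b_2-4f_0+6$, $b=-2(a-10(f_0-2))$, and $c=120$ obtained by matching coefficients against Corollary~\ref{cor:smooth_Ehrhart_polys}. Setting $z=-1/2+i\beta$ makes $2m+1=2i\beta$ and $m(m+1)=-1/4-\beta^2$, so (for $\beta\neq 0$) the equation $L_P(z)=0$ collapses to the biquadratic
\begin{equation*}
g(\beta):=a\beta^4+b'\beta^2+c'=0,\qquad b':=\tfrac{5}{2}a-20(f_0-2),\quad c':=\tfrac{9}{16}a-5(f_0-2)+120,
\end{equation*}
and solving by the quadratic formula recovers the closed-form expression for $\beta^2$ stated in Theorem~\ref{thm:explicit}.

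It remains to apply Lemma~\ref{lem:deg_4_roots} to $g(\beta)$. Lemma~\ref{lem:dim_5_bound_1}, together with $f_0\geq 6$, gives $a>0$ and (since it implies $b_2\leq(52f_0-90)/7<12f_0-22$) also $b'<0$; Casagrande's bound $f_0\leq 14$ from Theorem~\ref{thm:Casagrande} yields $-5(f_0-2)\geq-60$, hence $c'\geq\tfrac{9}{16}a+60>0$. The positivity of the discriminant $(b')^2-4ac'$ reduces, after a short rearrangement, to $(a-10(f_0-2))^2>120a$, which is exactly the inequality of Lemma~\ref{lem:dim_5_bound_2}. Lemma~\ref{lem:deg_4_roots} then delivers four distinct real values of $\beta$, giving four roots $z=-1/2+i\beta$ of $L_P$, and these together with the automatic root $-1/2$ exhaust the degree-$5$ polynomial.

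The main obstacle is recognising the precise form of the discriminant, which is exactly why Lemma~\ref{lem:dim_5_bound_2} was phrased as it was; identifying $(b')^2-4ac'$ with $4\bigl((a-10(f_0-2))^2-120a\bigr)$ is a routine algebraic manipulation once the factorisation of $L_P$ in terms of $m(m+1)$ has been written down.
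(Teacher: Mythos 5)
Your proof is correct and follows essentially the same route as the paper: you arrive at the same biquadratic (your $g(\beta)=0$ is, after clearing denominators, exactly equation~\eqref{eq:dim_5_imaginary_2}), verify its sign and discriminant conditions with the same ingredients (Lemmas~\ref{lem:dim_5_bound_1}, \ref{lem:dim_5_bound_2}, \ref{lem:deg_4_roots} and Theorem~\ref{thm:Casagrande}), and finish by counting five distinct roots against the degree of $L_P$. The only difference is cosmetic: you reach the biquadratic by factoring $L_P(m)=\frac{1}{120}(2m+1)\,Q(m(m+1))$ via Macdonald reciprocity, whereas the paper separates real and imaginary parts of $L_P(\alpha+i\beta)$ and sets $\alpha=-1/2$.
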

\begin{proof}
Let $z=\alpha+i\beta\in\C$ be a root of $L_P$, where $P$ is a five-dimensional smooth polytope. By Corollary~\ref{cor:smooth_Ehrhart_polys} we see that $\alpha$ and $\beta$ must satisfy
\begin{equation}\label{eq:dim_5_real}
\begin{split}
(2\alpha+1)\Big((6+b_2-4f_0)\big((\alpha-1)\alpha(\alpha+1)(\alpha+&2)-10(\alpha+1)\alpha\beta^2+5(\beta^2+1)\beta^2\big)+\\
&20(f_0-2)\left((\alpha+1)\alpha-3\beta^2\right)+120\Big)=0,
\end{split}
\end{equation}
\begin{equation}\label{eq:dim_5_imaginary}
\begin{split}
(14f_0-b_2+94)&\beta+5(16f_0-b_2-30)\alpha\beta+20(f_0-2)(3\alpha^2-\beta^2)\beta-\\
10&(4f_0-b_2-6)(\alpha^2-\beta^2)\alpha\beta-(4f_0-b_2-6)(5\alpha^4-10\alpha^2\beta^2+\beta^4)\beta=0.
\end{split}
\end{equation}

Clearly $\alpha=-1/2, \beta=0$ is always a solution. Suppose that $\alpha=-1/2$ and $\beta\ne 0$. Equation~\eqref{eq:dim_5_real} holds, and from~\eqref{eq:dim_5_imaginary} we obtain
\begin{equation}\label{eq:dim_5_imaginary_2}
16(6+b_2-4f_0)\beta^4+40(22+b_2-12f_0)\beta^2+2134+9b_2-116f_0=0.
\end{equation}
This quadratic in $\beta^2$ has distinct real solutions if and only if
$$100(f_0-2)^2+(6+b_2-4f_0)^2> 20(6+b_2-4f_0)(f_0+4),$$
which holds by Lemma~\ref{lem:dim_5_bound_2}.

As in the four-dimensional case we consider the signs of the coefficients of~\eqref{eq:dim_5_imaginary_2}. The leading coefficient equals $1/2f_4$ and so is positive. The coefficient of $\beta^2$ is negative by Lemma~\ref{lem:dim_5_bound_1} and the fact that $f_0\geq 6$, and the constant term is positive (again by Lemma~\ref{lem:dim_5_bound_1}). Thus, by Lemma~\ref{lem:deg_4_roots}, equation~\eqref{eq:dim_5_imaginary_2} has four distinct real solutions.

Hence we have found all five roots of $L_P$, and in each case $\Re{z}=-1/2$ as required.
\end{proof}

From equations~\eqref{eq:dim_4_condition} and~\eqref{eq:dim_5_imaginary_2} we have

\begin{cor}\label{cor:explicit_dim_4_and_5}
Let $P$ be a smooth $d$-dimensional polytope, and suppose that $z=-1/2+\beta i\in\C$ is a root of $L_P$. If $d=4$ then
$$\beta^2=-\frac{17}{4}+\frac{3b_2}{b_2-2f_0}\pm\sqrt{1-\frac{12(f_0+2)}{b_2-2f_0}+\frac{36f_0^2}{(b_2-2f_0)^2}}.$$
If $d=5$ then $\beta=0$ or
$$\beta^2=-\frac{5}{4}+\frac{10(f_0-2)}{6+b_2-4f_0}\pm\sqrt{1-\frac{20(f_0+4)}{6+b_2-4f_0}+\frac{100(f_0-2)^2}{(6+b_2-4f_0)^2}}.$$
\end{cor}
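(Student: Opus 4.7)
Both halves of the corollary are purely mechanical: the proofs of Propositions~\ref{prop:real_part_dim_4} and~\ref{prop:real_part_dim_5} have already established that $\Re{z}=-1/2$ for every root $z$ and have reduced the determination of the imaginary part to solving a quadratic in $\beta^2$, namely equation~\eqref{eq:dim_4_condition} in dimension four and equation~\eqref{eq:dim_5_imaginary_2} in dimension five. The plan is simply to apply the quadratic formula to each of these equations and repackage the output in the form displayed in the corollary.

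For $d=4$, equation~\eqref{eq:dim_4_condition} is a quadratic in $\beta^2$ with leading coefficient $16(b_2-2f_0)$, which is nonzero (indeed positive, as was noted using $f_2$) by Lemma~\ref{lem:bounds_dim_4_1}. Applying the quadratic formula produces an expression of the shape $-B/(2A)\pm\sqrt{B^2-4AC}/(2A)$; a short manipulation rewrites $-B/(2A)$ as $-17/4+3b_2/(b_2-2f_0)$, and pulling the factor $(2A)^2=256(b_2-2f_0)^2$ out from under the square root places the discriminant into the form $1-12(f_0+2)/(b_2-2f_0)+36f_0^2/(b_2-2f_0)^2$. Positivity of the radicand — needed for the square root to be real — is exactly Lemma~\ref{lem:bounds_dim_4_2}, so we recover the claimed formula.

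The $d=5$ case proceeds identically. Apply the quadratic formula to~\eqref{eq:dim_5_imaginary_2}, whose leading coefficient $16(6+b_2-4f_0)$ is nonzero by Lemma~\ref{lem:dim_5_bound_1}; factor $(2A)^2$ out of the discriminant; and rearrange the linear term $-B/(2A)$ into $-5/4+10(f_0-2)/(6+b_2-4f_0)$. Positivity of the radicand is the content of Lemma~\ref{lem:dim_5_bound_2}. The only step requiring care is the final algebraic repackaging, where signs and common factors must be tracked carefully so that the same denominator appears in both summands; but beyond this bookkeeping no new lemma is needed beyond those already invoked in Section~\ref{sec:dim_4_and_5}.
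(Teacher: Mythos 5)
Your proposal is correct and matches the paper's own (one-line) justification: Corollary~\ref{cor:explicit_dim_4_and_5} is obtained by applying the quadratic formula in $\beta^2$ to equations~\eqref{eq:dim_4_condition} and~\eqref{eq:dim_5_imaginary_2}, with the radicands' positivity given by Lemmas~\ref{lem:bounds_dim_4_2} and~\ref{lem:dim_5_bound_2}. The only slip is cosmetic: in dimension four $2A=32(b_2-2f_0)$, so the factor extracted from under the square root is $(2A)^2=1024(b_2-2f_0)^2$, not $256(b_2-2f_0)^2$; the resulting formula is unaffected.
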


\section{Concluding Remarks}
In four dimensions one can prove Theorem~\ref{thm:Golyshev} without knowing the explicit equation for the Ehrhart polynomial. We require the following result.

\begin{prop}[\protect{\cite[Proposition~1.9]{BHW07}}]\label{prop:BHW}
Let $P$ be a four-dimensional reflexive polytope. Every root $z\in\C$ of $L_P(m)$ has $\Re{z}=-1/2$ if and only if
\begin{itemize}
\item[(i)] $2\abs{\partial P\cap\Z^4}\leq 9\,\vol{P}+16$, and
\item[(ii)] $\left(\abs{\partial P\cap\Z^4}-4\,\vol{P}\right)^2\geq 16\,\vol{P}$.
\end{itemize}
\end{prop}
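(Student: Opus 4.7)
The strategy is to exploit palindromy. By~\eqref{eq:reflexive_layers} together with Macdonald reciprocity, $L_P(m)=L_P(-1-m)$, so $Q(x):=L_P(x-1/2)$ is an even polynomial of degree four and may be written $Q(x)=Ax^4+Bx^2+C$. A complex root $z$ of $L_P$ has $\Re{z}=-1/2$ if and only if $x:=z+1/2$ is purely imaginary, i.e., $t:=x^2$ is real and non-positive. Since the four roots of $Q$ are $\pm\sqrt{t_1},\pm\sqrt{t_2}$, where $t_1,t_2$ are the roots of the quadratic $f(t):=At^2+Bt+C$, every root of $L_P$ lies on the critical line if and only if $f$ has two real, non-positive roots.

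Next I would compute $A,B,C$ explicitly. Palindromy leaves only two free coefficients in $L_P$; taking these to be the leading coefficient $V:=\vol{P}$ and $b_P:=\abs{\partial P\cap\Z^4}$ (the latter recovered from $L_P(1)=1+b_P$, using that the unique interior lattice point of $P$ is the origin), a direct expansion gives
$$L_P(m)=Vm^4+2Vm^3+\left(\tfrac{1}{2}b_P-V\right)m^2+\left(\tfrac{1}{2}b_P-2V\right)m+1,$$
and then substituting $m=x-1/2$ yields $A=V$, $B=(b_P-5V)/2$, and $C=(16+9V-2b_P)/16$. Since $A>0$, the requirement that both roots of $f$ be real and non-positive---encoded via non-negative discriminant, non-negative product of roots, and non-positive sum of roots---becomes after a short algebraic simplification the three inequalities (ii) $(b_P-4V)^2\geq 16V$, (i) $2b_P\leq 9V+16$, and the additional bound $b_P\geq 5V$.

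The main obstacle is this last inequality $b_P\geq 5V$, which is not among (i), (ii) as stated. Were it to fail in the regime $b_P\leq 4V-4\sqrt{V}$, condition (ii) would still hold but both roots of $f$ would be positive, and $L_P$ would acquire \emph{real} roots off the critical line; so the proof must show that (i), (ii), and reflexivity together force $b_P\geq 5V$. I would attack this via the reflexive $h^*$-vector $(1,h_1,h_2,h_1,1)$: using $b_P=4+h_1$ and $24V=2+2h_1+h_2$, the condition $b_P\geq 5V$ rewrites as $5h_2\leq 86+14h_1$, which I would try to derive from standard constraints on Gorenstein $h^*$-vectors (Hibi-type inequalities combined with bounds arising from $P$ being a genuine lattice polytope), or else argue directly that the regime $b_P\leq 4V-4\sqrt{V}$ is incompatible with reflexivity in dimension four. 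Verifying this automatic passage from (i), (ii) to the missing third condition is the step I expect to require the most care.
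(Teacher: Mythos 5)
Your reduction is sound as far as it goes, and it is worth saying up front that the paper itself offers nothing to compare it with: Proposition~\ref{prop:BHW} is quoted from [BHW07], and the proof environment that follows it in the paper is an alternative proof of Theorem~\ref{thm:Golyshev} in dimension four \emph{using} the proposition, not a proof of the proposition. Your computation is correct: palindromy gives $L_P(m)=Vm^4+2Vm^3+(\tfrac12 b_P-V)m^2+(\tfrac12 b_P-2V)m+1$ with $V=\vol{P}$, $b_P=\abs{\partial P\cap\Z^4}$, and with $A=V$, $B=(b_P-5V)/2$, $C=(16+9V-2b_P)/16$ the exact criterion for all roots to have real part $-1/2$ is the triple $B^2-4AC\ge0$ (condition (ii)), $C\ge0$ (condition (i)), and $B\ge0$, i.e.\ $b_P\ge 5\vol{P}$.

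The genuine gap is the step you defer to the end, and it cannot be repaired: (i), (ii) and reflexivity do \emph{not} force $b_P\ge5\vol{P}$. Take $P=\{x\in\R^4:\ x_i\ge-1,\ x_1+x_2+x_3+x_4\le1\}$, the translate of $5\Delta$ dual to the smooth simplex of $\mathbb{P}^4$; it is reflexive, $L_P(m)={5m+4\choose 4}$, $\vol{P}=625/24$ and $b_P=125$. Then (i) reads $250\le 6009/24$ and (ii) reads $(125/6)^2=15625/36\ge 10000/24=15000/36$, so both hold; yet the roots are $-1/5,-2/5,-3/5,-4/5$, none with real part $-1/2$. Consistently with your analysis, $125<5\cdot 625/24$, so $B<0$ and both roots of your quadratic $f$ are positive. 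Note also that the failure occurs on the branch $b_P\ge 4V+4\sqrt{V}$ of (ii), not in the regime $b_P\le 4V-4\sqrt{V}$ which is the only one you flag: once $\vol{P}>16$, the inequalities $4V+4\sqrt{V}\le b_P<5V$ and (i) are simultaneously satisfiable. So your plan of deducing the third inequality from (i), (ii) and $h^*$-constraints is doomed; either the hypotheses quoted here differ from the actual statement in [BHW07], or the proposition as stated is missing the condition $\abs{\partial P\cap\Z^4}\ge5\vol{P}$, and your three-condition criterion is the correct characterization. (The paper's application to smooth four-dimensional polytopes survives, since there $b_P=f_0$ and $24\vol{P}=f_3=b_2-2f_0\le 3f_0$, so $b_P\ge5\vol{P}$ holds comfortably.)
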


\begin{proof}[Alternative proof in dimension four.]
First we show that condition~(i) of Proposition~\ref{prop:BHW} is satisfied. Since $P$ is smooth, $f_0=\abs{\partial P\cap\Z^4}$. It follows from Lemma~\ref{lem:bounds_dim_4_1} that $15f_0\leq 3b_2+30$. Hence $9f_0\leq 3(b_2-2f_0)+30$. By Theorem~\ref{thm:Casagrande} we have that $f_0\leq 12$, giving us the (very crude) inequality
\begin{equation}\label{eq:dim_4_crude}
16f_0<3(b_2-2f_0)+128.
\end{equation}
In four dimensions we have that $f_3=b_2-2f_0$ (\cite[Theorem~4.2]{HK10}) and, since $P$ is smooth, $f_3=24\,\vol{P}$. Substituting into equation~\eqref{eq:dim_4_crude} gives condition~(i).

That Proposition~\ref{prop:BHW}~(ii) holds is immediate from Lemma~\ref{lem:bounds_dim_4_2} and the fact that $b_2-2f_0=24\,\vol{P}$.
\end{proof}

Theorem~\ref{thm:explicit} tells us that in order to compute the roots of the Ehrhart polynomial we need only know $f_0$ and, in dimensions four and five, $b_2:=\abs{\partial(2P)\cap\Z^d}$. Clearly $f_0\ge d+1$, and Theorem~\ref{thm:Casagrande} provides a sharp upper bound. The values of $b_2$ can be calculated from {\O}bro's classification~\cite{Obr07}. The possible pairs $(f_0,b_2)$ are reproduced in Tables~\ref{tab:pairs_dim_4} and~\ref{tab:pairs_dim_5}.

\begin{table}[t]
\centering
\begin{tabular}{|c|cccccccccccccccccccc|}
\hline
$f_0$&5&6&6&7&7&7&8&8&8&8&9&9&9&9&9&10&10&10&11&12\\
\hline
$b_2$&15&20&21&25&26&27&31&32&33&34&36&38&39&41&42&44&45&50&52&60\\
\hline
\end{tabular}
\caption{The possible pairs $(f_0,b_2)$ for the $124$ four-dimensional smooth polytopes.}
\label{tab:pairs_dim_4}
\end{table}

\begin{table}[t]
\centering
\begin{tabular}{l}
\begin{tabular}{|c|ccccccccccccccc|}
\hline
$f_0$&6&7&7&8&8&8&8&9&9&9&9&9&10&10&10\\
\hline
$b_2$&21&27&28&33&34&35&36&40&41&42&43&44&46&49&50\\
\hline
\end{tabular}\vspace{0.3em}\\
\begin{tabular}{|c|cccccccccccccc|}
\hline
$f_0$&10&10&10&11&11&11&11&11&11&12&12&12&13&14\\
\hline
$b_2$&51&52&53&56&58&59&60&61&62&66&67&72&76&86\\
\hline
\end{tabular}
\end{tabular}
\caption{The possible pairs $(f_0,b_2)$ for the $866$ five-dimensional smooth polytopes.}
\label{tab:pairs_dim_5}
\end{table}
\begin{ack}
The authors wish to express their gratitude to Alessio Corti for alerting them to~\cite{Gol09}.
\end{ack}
\bibliographystyle{amsalpha}

\begin{thebibliography}{BDLD{\etalchar{+}}05}

\bibitem[Bat94]{Bat94}
Victor~V. Batyrev, \emph{Dual polyhedra and mirror symmetry for {C}alabi-{Y}au
  hypersurfaces in toric varieties}, J. Algebraic Geom. \textbf{3} (1994),
  no.~3, 493--535.

\bibitem[BD08]{BD08}
Benjamin Braun and Mike Develin, \emph{Ehrhart polynomial roots and {S}tanley's
  non-negativity theorem}, Integer points in polyhedra---geometry, number
  theory, representation theory, algebra, optimization, statistics, Contemp.
  Math., vol. 452, Amer. Math. Soc., Providence, RI, 2008, pp.~67--78.

\bibitem[BDLD{\etalchar{+}}05]{BLDPS05}
M.~Beck, J.~A. De~Loera, M.~Develin, J.~Pfeifle, and R.~P. Stanley,
  \emph{Coefficients and roots of {E}hrhart polynomials}, Integer points in
  polyhedra---geometry, number theory, algebra, optimization, Contemp. Math.,
  vol. 374, Amer. Math. Soc., Providence, RI, 2005, pp.~15--36.

\bibitem[BHW07]{BHW07}
Christian Bey, Martin Henk, and J{\"o}rg~M. Wills, \emph{Notes on the roots of
  {E}hrhart polynomials}, Discrete Comput. Geom. \textbf{38} (2007), no.~1,
  81--98.

\bibitem[Bra08]{Bra08}
Benjamin Braun, \emph{Norm bounds for {E}hrhart polynomial roots}, Discrete
  Comput. Geom. \textbf{39} (2008), no.~1-3, 191--193.

\bibitem[Cas06]{Cas04}
Cinzia Casagrande, \emph{The number of vertices of a {F}ano polytope}, Ann.
  Inst. Fourier (Grenoble) \textbf{56} (2006), no.~1, 121--130.

\bibitem[Ehr67]{Ehr67}
Eug{\`e}ne Ehrhart, \emph{Sur un probl\`eme de g\'eom\'etrie diophantienne
  lin\'eaire. {II}. {S}yst\`emes diophantiens lin\'eaires}, J. Reine Angew.
  Math. \textbf{227} (1967), 25--49.

\bibitem[Gol09]{Gol09}
V.~V. Golyshev, \emph{On the canonical strip}, Uspekhi Mat. Nauk \textbf{64}
  (2009), no.~1(385), 139--140.

\bibitem[HHO10]{HHO10}
Takayuki Hibi, Akihiro Higashitani, and Hidefumi Ohsugi, \emph{Roots of
  {E}hrhart polynomials of {G}orenstein {F}ano polytopes},
  \href{http://arxiv.org/abs/1001.4165v1}{\texttt{arXiv:1001.4165v1
  [math.CO]}}.

\bibitem[HK10]{HK10}
G{\'a}bor Heged{\"u}s and Alexander~M. Kasprzyk, \emph{The boundary volume of
  lattice polytopes},
  \href{http://arxiv.org/abs/1002.2815v2}{\texttt{arXiv:1002.2815v2
  [math.CO]}}.

\bibitem[Mac71]{Mac71}
I.~G. Macdonald, \emph{Polynomials associated with finite cell-complexes}, J.
  London Math. Soc. (2) \textbf{4} (1971), 181--192.

\bibitem[{\O}br07]{Obr07}
Mikkel {\O}bro, \emph{An algorithm for the classification of smooth {F}ano
  polytopes}, \href{http://arxiv.org/abs/0704.0049}{\texttt{arXiv:0704.0049v1
  [math.CO]}}, classifications available from
  \href{http://grdb.lboro.ac.uk/}{\texttt{http://grdb.lboro.ac.uk/}}.

\bibitem[Pfe07]{Pfe07}
Julian Pfeifle, \emph{Gale duality bounds for roots of polynomials with
  nonnegative coefficients},
  \href{http://arxiv.org/abs/0707.3010v2}{\texttt{arXiv:0707.3010v2
  [math.CO]}}.

\end{thebibliography}
\newcommand{\etalchar}[1]{$^{#1}$}
\providecommand{\bysame}{\leavevmode\hbox to3em{\hrulefill}\thinspace}
\providecommand{\MR}{\relax\ifhmode\unskip\space\fi MR }
\providecommand{\MRhref}[2]{%
  \href{http://www.ams.org/mathscinet-getitem?mr=#1}{#2}
}
\providecommand{\href}[2]{#2}

\end{document}